\documentclass{siamltex}
\usepackage{amsfonts,amsmath,mathrsfs}
\usepackage{amssymb}
\usepackage{hyperref}
\usepackage{graphicx}
\usepackage{wrapfig}
\usepackage{subfig}
\usepackage{float}
\usepackage{bbm}

% Personal definitions
\newcommand{\N}{\mathbb{N}}
\newcommand{\R}{\mathbb{R}}
\newcommand{\C}{\mathbb{C}}

\newcommand{\Ecal}{\mathcal{E}}
\newcommand{\Vcal}{\mathcal{V}}
\newcommand{\Ical}{\mathcal{I}}
\newcommand{\Jcal}{\mathcal{J}}
\newcommand{\Ucal}{\mathcal{U}}
\newcommand{\Rcal}{\mathcal{R}}
\newcommand{\norm}[1]{\left\| #1 \right\|}
\newtheorem{remark}[theorem]{Remark}
\newtheorem{algorithm}{Algorithm}

% own overscript definitions
\newcommand{\overbar}[1]{\mkern 1.0mu\overline{\mkern-1.0mu#1\mkern-1.0mu}\mkern 1.0mu}

\setlength{\belowcaptionskip}{0cm} 

%% Actual document begins

\title{Optimizing electrode positions in electrical impedance tomography}

\author{Nuutti Hyv\"onen\footnotemark[2]
\and Aku Sepp\"anen\footnotemark[3]
\and Stratos Staboulis\footnotemark[2]
}

\begin{document}
\maketitle

\renewcommand{\thefootnote}{\fnsymbol{footnote}}

\footnotetext[2]{Aalto University, Department of Mathematics and Systems Analysis, P.O. Box 11100, FI-00076 Aalto, Finland (nuutti.hyvonen@aalto.fi, stratos.staboulis@aalto.fi). The work of N.~Hyv\"onen and 
S.~Staboulis was supported by the Academy of Finland (decision 141044).}
\footnotetext[3]{University of Eastern Finland, Department of Applied Physics, FI-70211 Kuopio, Finland (aku.seppanen@uef.fi). The work of Aku Sepp\"anen was supported by the Academy of Finland (the Centre of Excellence in Inverse Problems Research and decisions 270174 and 273536).}

\renewcommand{\thefootnote}{\arabic{footnote}}

\begin{abstract}
Electrical impedance tomography is an imaging modality for recovering information about the conductivity inside a physical body from boundary measurements of current and voltage. In practice, such measurements are performed with a finite number of contact electrodes. This work considers finding  optimal positions for the electrodes within the Bayesian paradigm based on available prior information on the conductivity; the aim is to place the electrodes so that the posterior density of the (discretized) conductivity, i.e., the conditional density of the conductivity given the measurements, is as localized as possible. To make such an approach computationally feasible, the complete electrode forward model of impedance tomography is linearized around the prior expectation of the conductivity, allowing explicit representation for the (approximate) posterior covariance matrix. Two approaches are considered: minimizing the trace or the determinant of the posterior covariance. The introduced optimization algorithm is of the steepest descent type, with the needed gradients computed based on appropriate Fr\'echet derivatives of the complete electrode model. The functionality of the methodology is demonstrated via two-dimensional numerical experiments.
\end{abstract}

\begin{keywords}
Electrical impedance tomography, optimal electrode locations, Bayesian inversion, complete electrode model, optimal experiment design
\end{keywords}

\begin{AMS}
65N21, 35Q60, 62F15
\end{AMS}

\pagestyle{myheadings}
\thispagestyle{plain}
\markboth{N. HYV\"ONEN, A. SEPP\"ANEN, AND S. STABOULIS}{OPTIMIZING ELECTRODE POSITIONS IN EIT}

%%%%%%%%%%%%%%%%%%%%%%%%%%%%%%%%%%%%%%%%%%%%%%%%%%%%%
%%%%%%%%%%%%%%%%%%%%%%%%%%%%%%%%%%%%%%%%%%%%%%%%%%%%%

\section{Introduction}
\label{sec:intro}
{\em Electrical impedance tomography} (EIT) is an imaging modality for recovering information 
about the electrical conductivity inside a physical body from boundary measurements of current and potential. 
In practice, such measurements are performed with a finite number of contact electrodes. 
The reconstruction problem of EIT is a highly nonlinear and illposed inverse problem. 
For more information on the theory and practice of EIT, we refer to the review articles
\cite{Borcea02,Cheney99,Uhlmann09} and the references therein.

The research on {\em optimal experiment design} in EIT has mostly focused on
determining optimal current injection patterns.
The most well-known approach to optimizing current injections 
%in EIT
is based on the {\em distinguishability} criterion
\cite{Isaacson1986},
i.e.,~maximizing the norm of the difference between the electrode potentials 
corresponding to the unknown true conductivity and a known reference conductivity distribution.
Several variants of the distinguishability approach have been proposed; see,~e.g.,~\cite{Koksal1995,Lionheart2001} for versions 
%of the distinguishability method 
with constraints on the  injected currents. 
The application of the method to
planar electrode arrays was considered in \cite{Kao2003}.
The distinguishability criterion leads to the use of current patterns
generated by exciting several electrodes simultaneously.
For other studies where the sensitivity of the EIT measurements is controlled by
injecting currents through several electrodes at a time, see
\cite{Polydorides2002,Yan2006}.
In geophysical applications of EIT,
the data is often collected using four-point measurements;
choosing the optimal ones
%set of four-point measurements 
for different electrode settings was studied in
\cite{alHagrey2002,Furman2007,Stummer2004,Wilkinson2006}.
In all the above-cited works, the optimal experiment setup was considered in the
deterministic inversion framework.
The {\em Bayesian} approach to 
selecting optimal current patterns was studied in \cite{Kaipio04,Kaipio07}.
In the Bayesian experiment design \cite{Atkinson2007,Chaloner1995},
(statistical) prior information on the unknown is taken into account in
optimizing the measurements.

In addition to choosing the electrode currents,
the sensitivity of EIT measurements can also be controlled by
varying the electrode configuration.
%The effect of the orientation of a 2D electrode array in heart imaging was studied in
%\cite{Noordegraaf1996}, and
%in \cite{Nebuya2006}, finding the optimal level of a 2D electrode array
%in lung imaging was studied,
%In \cite{Graham2007},
%the performance of a small set of 3D electrode placement
%configurations under the constraints of a 16-electrode adjacent drive system
%....performance of a small set of 3D electrode placement
%configurations under the constraints of a 16-electrode adjacent drive system intended for
%2D applications with electrodes arranged in two planes
%.....Head imaging, real data: different headnets were tested. Not only the electrode positions varied; also types of electrodes, etc..
%\cite{Tidswell2003}
%The effect of the electrode size has been discussed e.g. in \cite{Paulson1992}.
For studies on comparing different 
%electrode 
setups,
see \cite{Graham2007,Nebuya2006,Noordegraaf1996,Paulson1992,Tidswell2003}.
{\em Optimizing} the electrode locations in EIT, however, has been studied only recently 
in \cite{Haber10}.
In this study,
a large set of {\em point electrodes} was set in predefined locations,
and an optimization method with sparsity constraints for the current injections and potential measurements 
was applied to select a feasible set of electrodes. 
%for the injections and the measurements.
The number of active electrodes was not fixed. See also \cite{Khambete99} for a study on the optimal placement of {\em four} electrodes in impedance pneumography.

In the present paper, 
the problem of optimizing the electrode locations is considered in a more realistic setting
than in \cite{Haber10}.
We model the EIT measurements with the {\em complete electrode model} (CEM) \cite{Cheng89}, 
which takes into account the electrode shapes and the contact impedances at the electrode-boundary interfaces.
We aim at finding optimal locations for the 
finite-sized electrodes.
Unlike in \cite{Haber10},
the admissible electrode locations are not limited to a finite set of predefined points.
On the other hand, the number of electrodes {\em is} predefined.
These attributes are appealing from a practical point of view
because in many laboratories/clinics/field surveys,
the number of electrodes is limited by the specifications of the measurement device,
while the possibilities of arranging the electrodes are almost unlimited.
As in \cite{Kaipio04,Kaipio07},
the optimal experiments are considered in the Bayesian inversion framework to enable the incorporation of prior information on the conductivity.
Given a prior probability density for the (discretized) conductivity, reflecting the knowledge 
about the interior of the examined object before the measurements, 
the aim is to place the electrodes so that the posterior density of the conductivity, i.e., 
the conditional density of the conductivity given the measurements, is as localized as possible 
(when marginalized over all possible measurements). 
To be more precise, the considered design criteria are 
the A- and D-optimality (see,~e.g.,~\cite{Atkinson2007,Chaloner1995}).
Allowing simplifications, the former corresponds to the minimization of the trace 
of the posterior covariance and the latter to the maximization of the information gain 
when the prior is replaced by the posterior. 

To make our approach computationally feasible, 
we linearize the measurement map of the CEM around the prior expectation of the conductivity, 
which allows an explicit representation for the posterior covariance and thus also for the objective functions corresponding to the A- and D-optimality criteria. 
Since the introduced optimization algorithm is of the steepest descent type, 
it requires numerical computation of the derivatives for the linearized measurement map 
with respect to the electrode locations, that is, of certain second order (shape) derivatives for the CEM. 
We perform the needed differentiations by resorting to the appropriate Fr\'echet derivatives of the 
(non-discretized) CEM (cf.~\cite{Darde12,Darde13a}); 
in addition to reducing the computational load, 
this leads to higher stability compared to perturbation-based numerical differentiation schemes. 

The functionality of the chosen methodology is demonstrated via two-dimensional numerical experiments. The conclusions of our tests are three-fold: (i) The introduced optimization algorithm seems functional, that is, it finds the electrode locations satisfying the considered optimality criteria in settings where the global optimum can be determined by testing all possible cases. (ii) The prior information on the conductivity considerably affects the optimal electrode locations in many relevant settings. Furthermore, the optimal electrode locations may be nonuniform even if the prior information on the conductivity is homogeneous over the examined object. The extent of this latter effect depends heavily on the complexity of the object shape. (iii) Choosing optimal electrode locations results in improved solutions to the (nonlinear) inverse problem of EIT --- at least, for our Bayesian reconstruction algorithm (see, e.g., \cite{Darde13a}) and if the target conductivity is drawn from the assumed prior density. 

This text is organized as follows. Section~\ref{sec:CEM} recalls the CEM and introduces the needed Fr\'echet derivatives. The principles of Bayesian inversion and optimal experiment design are considered in Section~\ref{sec:Bayes} and the implementation of the optimization algorithm in Section~\ref{sec:alg}. Finally, the numerical results are presented in Section~\ref{sec:numer} and the conclusions listed in Section~\ref{sec:conc}. 

\section{Complete electrode model}
\label{sec:CEM}
We start by recalling the CEM of EIT. Subsequently, we introduce the Fr\'echet derivatives of the (linearized) current-to-voltage map of the CEM needed for optimizing the electrode locations in the following sections.

\subsection{Forward problem}

In practical EIT, $M \geq 2$ contact electrodes $\{ E_m\}_{m=1}^M$ are attached to the exterior surface of a body $\Omega$. A net current $ I_m\in\C $ is driven through each $ E_m $ and the resulting constant electrode potentials $ U = [U_1,\ldots,U_M]^{\rm T} \in \C^M $ are measured.
%It is reasonable to assume that there are no sinks or sources inside the object. 
Because of charge conservation, any applicable current pattern $ I = [I_1,\ldots,I_M]^{\rm T} $ belongs to the subspace
\[ 
\C^M_\diamond = \Big\{V\in\C^M\,\Big|\, \sum_{m=1}^M V_m = 0\Big\}.
\]
The contact impedances at the electrode-object interfaces are modeled by a vector $z = [z_1,\dots, z_M]^{\rm T} \in \C^M$ whose components are assumed to satisfy 
\begin{equation}
\label{eq:z}
{\rm Re} (z_m) \geq c, \quad m=1, \dots, M,
\end{equation}
for some constant $c>0$.

We assume that $\Omega \subset \R^n$, $n=2,3$, is a bounded domain with a smooth boundary. Moreover, the electrodes $\{ E_m\}_{m=1}^M$ are identified with open, connected, smooth, nonempty subsets of $\partial \Omega$ and assumed to be  mutually well separated, i.e., $\overbar{E}_k \cap \overbar{E}_l = \emptyset$ for $k \not= l$. We denote $E = \cup E_m$. The mathematical model that most accurately predicts real-life EIT measurements is the CEM \cite{Cheng89}: The electromagnetic potential $u$ inside $\Omega$ and the potentials on the electrodes $U$ satisfy
\begin{equation}
\label{eq:cemeqs}
\begin{array}{ll}
\displaystyle{\nabla \cdot\sigma\nabla u = 0 \qquad}  &{\rm in}\;\; \Omega, \\[6pt] 
{\displaystyle{\nu\cdot\sigma\nabla u} = 0 }\qquad &{\rm on}\;\;\partial\Omega\setminus\overbar{E},\\[6pt] 
{\displaystyle u+z_m{\nu\cdot\sigma\nabla u} = U_m } \qquad &{\rm on}\;\; E_m, \quad m=1, \dots, M, \\[2pt] 
{\displaystyle \int_{E_m}\nu\cdot\sigma\nabla u\,{\rm d}S} = I_m, \qquad & m=1,\ldots,M, \\[4pt]
\end{array}
\end{equation}
interpreted in the weak sense. Here, $ \nu = \nu(x) $ denotes the exterior unit normal of $ \partial\Omega $. Moreover, the (real) symmetric admittivity distribution $\sigma\in L^{\infty}(\Omega, \C^{n \times n})$ that characterizes the electric properties of the medium is assumed to satisfy
\begin{equation}
\label{eq:sigma}
{\rm Re} (\sigma \xi \cdot \overbar{\xi}) \geq c | \xi |^2,
%\qquad |\sigma \xi \cdot \overbar{\xi}| \leq \sigma_+_+ | \xi |^2, \qquad 
\qquad c > 0,
%\sigma_+_+ > 0, 
\end{equation}
for all $\xi \in \C^n$ almost everywhere in $\Omega$. 
%The space of function with these properties is denotet by $L^\infty_{+,{\rm symm}}(\Omega)$.
A physical justification of \eqref{eq:cemeqs} can be found in \cite{Cheng89}.

Given an input current pattern $ I\in \C^M_\diamond $,
an admittivity $\sigma$ and contact impedances $z$ with the properties \eqref{eq:sigma} and \eqref{eq:z}, respectively, the potential pair $ (u,U) \in H^1(\Omega) \oplus \C^M $ is uniquely determined by \eqref{eq:cemeqs} up to the ground level of potential. This can be shown by considering the Hilbert space $ \mathbb{H}^1 :=  (H^1(\Omega)\oplus \C^M)/\C $ with the norm
\begin{equation*}
%\begin{split}
\norm{(v,V)}_{\mathbb{H}^1} = \inf_{c\in\C}\Big\{ \norm{v-c}_{H^1(\Omega)}^2 + \sum_{m=1}^M|V_m-c|^2 \Big\}^{1/2}
%\end{split}
\end{equation*}
and the variational formulation of \eqref{eq:cemeqs} given by \cite{Somersalo92} 
\begin{equation}\label{eq:varcem}
B_\sigma\big((u,U),(v,V)\big)  \,=  \, I\cdot \overbar{V} \qquad {\rm for} \ {\rm all} \ (v,V) \in  \mathbb{H}^1,
\end{equation}
where
$$
B_\sigma\big((u,U),(v,V)\big) = \int_\Omega \sigma\nabla u\cdot \nabla \overbar{v} \,{\rm d}x + \sum_{m=1}^M \frac{1}{z_m}\int_{E_m}(u-U_m)(\overbar{v}-\overbar{V}_m)\,{\rm d}S.
$$
Since $B_\sigma: \mathbb{H}^1 \times \mathbb{H}^1$ is continuous and coercive \cite{Somersalo92,Hyvonen04}, it follows easily from the Lax--Milgram theorem that \eqref{eq:varcem} is uniquely solvable. Moreover, the solution pair $(u,U) \in \mathbb{H}^1$ depends continuously on the data,
\begin{equation}
\label{eq:bound}
\| (u,U) \|_{\mathbb{H}^1} \leq C  |I|,
\end{equation}
where $C = C(\Omega,E, \sigma, z) > 0$. 

The measurement, or current-to-voltage map of the CEM is defined via
\begin{equation}\label{eq:measmat}
R: I \mapsto U, \quad \C^M_\diamond \to \C^M / \C .
\end{equation} 
Due to an obvious symmetry of \eqref{eq:varcem}, $R$ can be represented as a symmetric complex $(M-1)\times(M-1)$ matrix (with respect to any chosen basis for $\C^M_\diamond \sim \C^M/\C)$.

\subsection{Linearization of the forward model}
Next, we consider the linearization of the map $\sigma \mapsto (u(\sigma),U(\sigma))$ at a fixed current pattern $I \in \C^M_\diamond$. To this end, let us define the 
%open
set of admissible conductivities,
$$
%L^\infty(\Omega, \C^{n \times n}) \supset
\Sigma := \big\{ \sigma \in L^\infty(\Omega, \C^{n \times n}) \ | \ \sigma^{\rm T} = \sigma \text{ and } \eqref{eq:sigma} \text{ holds with some constant } c > 0 \big\},
$$
and the space of conductivity perturbations,
$$
K := \{ \kappa \in L^\infty(\Omega, \C^{n \times n}) \ | \ \kappa^{\rm T} = \kappa \}.
$$
It is well known that $(u(\sigma),U(\sigma))$ is Fr\'echet differentiable with respect to $\sigma$: For a fixed $\sigma \in \Sigma$ and any small enough $\kappa \in K$ in the $L^\infty$-topology, it holds that (cf., e.g., \cite{KaipioSomersalo})
\begin{equation}
\label{eq:cond_deriv}
\big\| \big(u(\sigma + \kappa ), U(\sigma + \kappa)\big) - \big(u(\sigma),U(\sigma)\big) - \big(u'(\sigma;\kappa), U'(\sigma;\kappa)\big) \big\|_{\mathbb{H}^1} = O\big(\| \kappa \|_{L^{\infty}(\Omega)}^2\big) |I|,
\end{equation}
where $(u'(\sigma;\kappa), U'(\sigma;\kappa)) \in \mathbb{H}^1$ is the unique solution of the variational problem
\begin{equation}
\label{eq:vardercem}
B_\sigma\big((u',U'),(v,V)\big) = - \int_{\Omega} \kappa \nabla u(\sigma) \cdot \nabla \overbar{v} \, {\rm d} x \qquad {\rm for} \ {\rm all} \ (v,V) \in  \mathbb{H}^1.
\end{equation}
In other words, the Fr\'echet derivative of $\sigma \mapsto (u(\sigma),U(\sigma)) \in \mathbb{H}^1$ at some $\sigma \in \Sigma$ is the linear map $K \ni \kappa \mapsto (u'(\sigma;\kappa), U'(\sigma;\kappa)) \in \mathbb{H}^1$. Notice that the unique solvability of \eqref{eq:vardercem} is a straightforward consequence of the Lax--Milgram theorem due to the continuity and coercivity of $B_\sigma$.

If the support of the perturbation $\kappa$ is restricted to a compact subset of $\Omega$ and the conductivity $\sigma$ exhibits some extra regularity in some neighborhood of $\partial \Omega$, it can be shown that the norm on the left-hand side of \eqref{eq:cond_deriv} may be replaced with a stronger one. With this aim in mind, we define two new concepts: the space of admissible conductivities with H\"older boundary smoothness,
$$
\Sigma_{\partial \Omega}^{k,\beta} := \{ \sigma \in \Sigma \ | \ 
\sigma|_{G \cap \Omega} \in  C^{k,\beta}(\overline{G \cap \Omega}) \text{ for some open }  G \supset \partial \Omega \}, \quad k \in \N_0, \ 0 \leq \beta \leq 1,
$$ 
and the space of compactly supported conductivity perturbations, 
$$
K_\delta := \{\kappa \in K \ | \ {\rm dist}( {\rm \supp}\, \kappa, \partial \Omega) \geq \delta \},
$$
where $\delta > 0$.

\begin{theorem}
\label{thm:efrechet}
Let $\sigma \in \Sigma_{\partial \Omega}^{0,1}$ and $\delta > 0$ be fixed. Then, there exists a smooth domain $\Omega_0 \Subset \Omega$ such that
$$ 
\big\| \big(u(\sigma + \kappa ), \ U(\sigma + \kappa)\big) - \big(u(\sigma),U(\sigma)\big) - \big(u'(\sigma;\kappa), \ U'(\sigma;\kappa)\big) \big\|_{\mathbb{H}^s(\Omega\setminus \overbar{\Omega}_0)} \! = \!O\big(\| \kappa \|_{L^{\infty}(\Omega)}^2\big) |I|
$$
for all small enough $\kappa \in K_\delta$ in the topology of $L^\infty(\Omega)$ and any fixed $s < 2$. Here we denote $\mathbb{H}^s(\Omega\setminus \overbar{\Omega}_0) = (H^{s}(\Omega\setminus \overbar{\Omega}_0) \oplus \C^M) / \C$.
\end{theorem}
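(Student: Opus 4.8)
The plan is to show that, away from the support of $\kappa$, the remainder $(w_0,w_\partial)$ appearing on the left of the claimed estimate solves a \emph{source-free} complete electrode model, so that elliptic regularity up to $\partial\Omega$ applies; the only loss of regularity comes from the electrode edges $\bigcup_{m=1}^{M}\partial E_m$, which is exactly why the exponent is capped at $s<2$. To begin with, since $\sigma\in\Sigma_{\partial\Omega}^{0,1}$ there is an open set $G\supset\partial\Omega$ with $\sigma|_{G\cap\Omega}\in C^{0,1}(\overline{G\cap\Omega})$; I choose, depending only on $\delta$ and $\sigma$, smooth domains $\Omega_*\Subset\Omega_0\Subset\Omega$ with $\{x\in\Omega\,:\,{\rm dist}(x,\partial\Omega)\ge\delta\}\subset\Omega_*$ and $\overline{\Omega\setminus\overbar{\Omega}_*}\subset\overline{G\cap\Omega}$, which is possible by taking the collar $\Omega\setminus\overbar{\Omega}_*$ sufficiently thin. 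Then every $\kappa\in K_\delta$ vanishes on $\Omega\setminus\overbar{\Omega}_*$, where moreover $\sigma$ is Lipschitz; I will show the theorem holds with this $\Omega_0$.

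\emph{Equation for the remainder.} Abbreviate $(\tilde u,\tilde U):=(u(\sigma+\kappa),U(\sigma+\kappa))$, $(u,U):=(u(\sigma),U(\sigma))$ and $(u',U'):=(u'(\sigma;\kappa),U'(\sigma;\kappa))$. Since $B_{\sigma+\kappa}((\tilde u,\tilde U),(v,V))-B_\sigma((\tilde u,\tilde U),(v,V))=\int_\Omega\kappa\nabla\tilde u\cdot\nabla\overbar{v}\,{\rm d}x$, combining the identities \eqref{eq:varcem} for $\sigma+\kappa$ and $\sigma$ with \eqref{eq:vardercem} gives, after a short computation,
\begin{equation}
\label{eq:remeq}
B_\sigma\big((w_0,w_\partial),(v,V)\big)=\int_\Omega g\cdot\nabla\overbar{v}\,{\rm d}x\quad\text{for all }(v,V)\in\mathbb{H}^1,\qquad g:=-\kappa\,\nabla(\tilde u-u).
\end{equation}
Coercivity of $B_\sigma$ applied to \eqref{eq:vardercem} together with \eqref{eq:bound} yields $\|(u',U')\|_{\mathbb{H}^1}\le C\|\kappa\|_{L^\infty(\Omega)}|I|$, so by \eqref{eq:cond_deriv} also $\|\nabla(\tilde u-u)\|_{L^2(\Omega)}\le C\|\kappa\|_{L^\infty(\Omega)}|I|$; hence $\|g\|_{L^2(\Omega)}=O(\|\kappa\|_{L^\infty(\Omega)}^2)|I|$ with ${\rm supp}\,g\subset{\rm supp}\,\kappa\subset\Omega_*$, while \eqref{eq:cond_deriv} and coercivity give $\|(w_0,w_\partial)\|_{\mathbb{H}^1}=O(\|\kappa\|_{L^\infty(\Omega)}^2)|I|$. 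Testing \eqref{eq:remeq} against pairs $(v,V)$ whose first component is supported in $\Omega\setminus\overbar{\Omega}_*$ shows that $w_0$ solves $\nabla\cdot(\sigma\nabla w_0)=0$ in $\Omega\setminus\overbar{\Omega}_*$ together with $\nu\cdot\sigma\nabla w_0=0$ on $\partial\Omega\setminus\overbar{E}$ and $w_0+z_m\,\nu\cdot\sigma\nabla w_0=w_{\partial,m}$ on $E_m$, $m=1,\dots,M$.

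\emph{Reduction to a mixed problem in the collar.} From now on replace $w_0$ and $w_{\partial,m}$ by $w_0-c$ and $w_{\partial,m}-c$, where $c$ realizes the infimum in $\|(w_0,w_\partial)\|_{\mathbb{H}^1}$, so that $\|w_0\|_{H^1(\Omega)}+\sum_m|w_{\partial,m}|=O(\|\kappa\|_{L^\infty(\Omega)}^2)|I|$ and the equations above are unchanged. Since $\sigma$ is Lipschitz and $w_0$ is $\sigma$-harmonic on $\Omega\setminus\overbar{\Omega}_*$, which is a two-sided neighbourhood of $\partial\Omega_0$, interior elliptic regularity gives $w_0\in H^2$ near $\partial\Omega_0$ with norm $O(\|\kappa\|_{L^\infty(\Omega)}^2)|I|$, hence $\|w_0\|_{H^{3/2}(\partial\Omega_0)}=O(\|\kappa\|_{L^\infty(\Omega)}^2)|I|$. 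Thus, on the collar $\Omega\setminus\overbar{\Omega}_0$, the function $w_0$ solves the mixed boundary value problem consisting of the above CEM conditions on $\partial\Omega$ and the Dirichlet condition with data in $H^{3/2}(\partial\Omega_0)$ on $\partial\Omega_0$. It remains to prove, for every fixed $s<2$, an a priori estimate of the form
\[
\|w_0\|_{H^s(\Omega\setminus\overbar{\Omega}_0)}\le C\Big(\|w_0\|_{H^{3/2}(\partial\Omega_0)}+\|w_0\|_{H^1(\Omega\setminus\overbar{\Omega}_0)}+\sum_{m=1}^M|w_{\partial,m}|\Big),\qquad C=C(\Omega,\Omega_0,E,\sigma,z,s),
\]
for the mixed problem just described; since $\|(w_0,w_\partial)\|_{\mathbb{H}^s(\Omega\setminus\overbar{\Omega}_0)}$ is bounded by the left-hand side, the bounds collected above then finish the proof.

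\emph{Boundary regularity: the main obstacle.} Away from the electrode edges $\bigcup_m\partial E_m$ and from $\partial\Omega_0$, the estimate is classical up-to-the-boundary $H^2$ regularity --- Neumann on $\partial\Omega\setminus\overbar{E}$, Robin on the interior of each $E_m$, Dirichlet near $\partial\Omega_0$ --- obtained through a partition of unity, flattening of the smooth boundary pieces, and the Lipschitz bound on $\sigma$ furnished by $\Sigma_{\partial\Omega}^{0,1}$. Near a point of $\partial E_m$ the boundary condition switches between $\nu\cdot\sigma\nabla w_0=0$ and $\nu\cdot\sigma\nabla w_0=z_m^{-1}(w_{\partial,m}-w_0)$, which share the same principal part; after flattening the boundary and freezing the coefficients, the local model is a harmonic function on a half-space whose conormal derivative jumps across the smooth codimension-two interface, and such a function lies locally in $H^s$ for every $s<2$ but not in $H^2$, its gradient carrying a logarithmic singularity --- which is precisely the threshold asserted in the theorem. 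Making this rigorous, e.g.\ via the singular-function decompositions for mixed boundary conditions of Grisvard and Dauge, or by invoking the known interior-to-boundary regularity theory for the complete electrode model, is the principal difficulty; granting it, the a priori estimate and hence the theorem follow.
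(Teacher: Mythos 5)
Your reduction is structurally the same as the paper's: isolate the remainder, observe that it solves a source-free CEM-type problem in a collar around $\partial\Omega$ where $\sigma$ is Lipschitz and $\kappa$ vanishes, push the $O(\|\kappa\|_{L^\infty(\Omega)}^2)|I|$ bounds from \eqref{eq:cond_deriv} onto the boundary data on $\partial\Omega_0$, and conclude by an $H^s$, $s<2$, a priori estimate up to $\partial\Omega$ whose only obstruction sits at the electrode edges. Up to that point your estimates are sound (using Dirichlet rather than Neumann data on $\partial\Omega_0$ is immaterial). The genuine gap is that the decisive step is not proved: you yourself label the edge regularity ``the principal difficulty'' and dispose of it by pointing at Grisvard--Dauge singular-function decompositions or an unspecified ``known'' regularity theory for the CEM. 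That is exactly where the content of the theorem lies, and the cited machinery does not apply off the shelf: those decompositions are developed for smooth or piecewise-constant coefficients and Dirichlet/Neumann-type transitions on polygonal or edge domains, whereas here $\sigma$ is merely Lipschitz near $\partial\Omega$, the transition is Neumann-to-Robin across a codimension-two interface in dimensions two and three, the Robin datum involves the unknown constants $U_m^\kappa$, and one needs a quantitative bound with constants uniform in $\kappa$. Your frozen-coefficient half-space picture correctly predicts the threshold $s<2$, but as stated it is a heuristic, not an estimate.

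The paper closes precisely this hole by an elementary bootstrap that avoids mixed-boundary corner analysis altogether, and you could graft it directly onto your setup. First, the Neumann trace $f^\kappa$ of the remainder on $\partial\Omega$ is bounded in $L^2(\partial\Omega)$ by its $\mathbb{H}^1$-norm as in \cite[proof of Lemma~2.1]{Hyvonen09}, cf.~\eqref{eq:f_cont}. Continuous dependence for the \emph{pure Neumann} problem on the smooth collar, with data $f^\kappa \in L^2(\partial\Omega)$ and $g^\kappa$ on $\partial\Omega_0$ controlled as in \eqref{eq:g_cont}, gives an $H^{3/2}$ bound and hence an $H^1(\partial\Omega)$ bound on the Dirichlet trace, \eqref{eq:ukappab}. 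The Robin condition on the electrodes, together with \eqref{eq:z}, then shows $f^\kappa \in H^1(E)$ with the same $O(\|\kappa\|_{L^\infty(\Omega)}^2)|I|$ bound, while $f^\kappa$ vanishes on $\partial\Omega\setminus\overbar{E}$; since the zero continuation of an $H^1(E)$ function belongs to $H^{t}(\partial\Omega)$ only for $t<1/2$, one gets \eqref{eq:f_cont2} for every $s<2$ --- this is where the cap $s<2$ genuinely enters --- and a second application of Neumann regularity on the smooth collar with Lipschitz $\sigma$ yields the $H^s$ estimate and the theorem. Replacing your appeal to corner-singularity theory by this Robin-to-Neumann bootstrap makes your argument complete.
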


\begin{proof}
Obviously, there exist smooth domains $\Omega_0$ and $\Omega_1$ such that $\Omega_1 \Subset \Omega_0 \Subset \Omega$, $\sigma|_{\Omega \setminus \overbar{\Omega}_1} \in C^{0,1}(\overline{\Omega} \setminus \Omega_1)$ and ${\rm supp}\, \kappa \subset \overbar{\Omega}_1$ for all $\kappa \in K_\delta$.

Let us denote
$$
(u^\kappa, U^\kappa) = \big( u(\sigma + \kappa ), U(\sigma + \kappa) \big) - \big(u(\sigma),U(\sigma)\big) - \big(u'(\sigma;\kappa), U'(\sigma;\kappa)\big) \in \mathbb{H}^1.
$$
It follows by a straightforward calculation from \eqref{eq:varcem} and \eqref{eq:vardercem} that
\begin{equation}
\label{eq:varkappacem}
B_\sigma\big((u^\kappa, U^\kappa), (v,V)\big) = \int_{\Omega}\kappa \nabla \big(u(\sigma) - u(\sigma+\kappa)\big) \cdot \nabla \overbar{v} \, {\rm d} x \qquad {\rm for} \ {\rm all} \ (v,V) \in  \mathbb{H}^1.
\end{equation}
For any compactly supported test function $v \in C_0^\infty(\Omega \setminus \overbar{\Omega}_1)$, the right hand side of \eqref{eq:varkappacem} and the second term of $B_\sigma((u^\kappa, U^\kappa), (v,V))$ vanish, which means that
\begin{equation}
\label{eq:kappa_basic}
\nabla \cdot (\sigma \nabla u^\kappa) = 0 \quad {\rm in} \ \Omega \setminus \overbar{\Omega}_1
\end{equation}
in the sense of distributions.

Resorting to the same techniques as used in~\cite{Somersalo92} when proving the equivalence of \eqref{eq:cemeqs} and \eqref{eq:varcem}, it follows easily from \eqref{eq:varkappacem} that altogether $(u^\kappa, U^\kappa)$  satisfies in $\Omega \setminus \overbar{\Omega}_0$ the boundary value problem
\begin{equation}
\label{eq:kappacemeqs}
\begin{array}{ll}
\displaystyle{\nabla \cdot(\sigma\nabla u^\kappa) = 0 \qquad}  &{\rm in}\;\; \Omega \setminus \overbar{\Omega}_0, \\[6pt] 
{\displaystyle{\nu\cdot\sigma\nabla u^\kappa} = 0 }\qquad &{\rm on}\;\;\partial\Omega\setminus\overbar{E},\\[6pt] 
{\displaystyle u^\kappa+z_m{\nu\cdot\sigma\nabla u^\kappa} = U^\kappa_m } \qquad &{\rm on}\;\; E_m, \quad m=1, \dots, M, \\[6pt] 
{\displaystyle \nu\cdot\sigma\nabla  u^\kappa = g^\kappa } \qquad &{\rm on}\;\; \partial \Omega_0, \\[2pt]
{\displaystyle \int_{E_m}\nu\cdot\sigma\nabla u^\kappa\,{\rm d}S} = 0, \qquad & m=1,\ldots,M, \\[4pt]
\end{array}
\end{equation}
where $g^\kappa \in H^{-1/2}(\partial \Omega_0)$ is the Neumann trace of $u^\kappa \in H^1(\Omega)/ \C$, which is well defined by virtue of \eqref{eq:kappa_basic} (cf.~\cite{Lions72}). Since $\sigma$ is Lipschitz continuous in $\Omega \setminus \overbar{\Omega}_1$, the interior regularity theory for elliptic partial differential equations \cite{Grisvard85} yields that $u^\kappa \in H^2(G)/ \C$ for any (smooth) open domain $G \Subset \Omega \setminus \overbar{\Omega}_1$ and, furthermore, 
$$
\| u^\kappa \|_{H^{2}(G_0)/\C} \leq C 
\| u^\kappa \|_{H^{1}(G)/\C}
$$
for any other (fixed) smooth domain $G_0 \Subset G$.
Choosing $G$ and $G_0$ to be open neighborhoods of $\partial \Omega_0$, the trace theorem finally gives the estimate
\begin{equation}
\label{eq:g_cont}
\| g^\kappa \|_{H^{1/2}(\partial \Omega_0)} \leq C \| u^\kappa \|_{H^{2}(G_0)/\C} \leq C 
\| u^\kappa \|_{H^{1}(G)/\C} = O\big(\| \kappa \|_{L^{\infty}(\Omega)}^2\big) |I|,
\end{equation}
where 
%$C = C(G,G_0,\partial \Omega_1,\sigma)>0$
the last inequality is a weaker version of \eqref{eq:cond_deriv}.
%Finally, since $(u^\kappa, U^\kappa)$ converges in $\mathbb{H}^1$ to $(u', U')$ as $\kappa$ goes to zero in $L^\infty(\Omega)$, we get that
%\begin{equation}
%\label{eq:g_cont}
%\| g^\kappa \|_{H^{3/2}(\partial \Omega_1)} \leq C \| u' \|_{H^{1}(G)/\C} \leq o(\| \kappa \|_\infty) |I|,
%\end{equation}
%where the constant $C>0$ can be chosen independently of $\kappa$ for small perturbations.

Let us then define $f^\kappa$ to be the Neumann trace of $u^\kappa$ on $\partial \Omega$. Exactly as in \cite[proof of Lemma~2.1]{Hyvonen09}, we get
\begin{equation}
\label{eq:f_cont}
\| f^\kappa \|_{L^2(\partial \Omega)} \leq  C \| (u^\kappa, U^\kappa) \|_{\mathbb{H}^1} \leq  O\big(\| \kappa \|_{L^{\infty}(\Omega)}^2\big) |I|, 
\end{equation}
where the second step is just \eqref{eq:cond_deriv}.
In particular, the trace theorem and the continuous dependence on the Neumann data for the first equation of \eqref{eq:kappacemeqs} indicate that (cf.~\cite{Grisvard85})
\begin{align}
\label{eq:ukappab}
\| u^\kappa \|_{H^1(\partial \Omega)/\C} &\leq C \| u^\kappa \|_{H^{3/2}(\Omega \setminus \overbar{\Omega}_0)/ \C} \leq C \big( \|f^\kappa \|_{L^2(\partial \Omega)} + \|g^\kappa \|_{L^2(\partial \Omega_0)} \big) \nonumber \\[1mm]
&\leq O\big(\| \kappa \|_{L^{\infty}(\Omega)}^2\big) |I|,
\end{align}
where the last step is obtained by combining \eqref{eq:g_cont} and \eqref{eq:f_cont}. Via a bootstrap type argument, we may now use the properties of zero continuation in Sobolev spaces \cite{Lions72} and the third equation of \eqref{eq:kappacemeqs} combined with \eqref{eq:z} to deduce for any $s < 2$ that
\begin{align}
\label{eq:f_cont2}
\| f^\kappa \|_{H^{s-3/2}(\partial \Omega)} & \leq C \| f^\kappa \|_{H^1(E)} \leq
C \Big( \sum_{m=1}^m \|U_m^\kappa - u^\kappa \|^2_{H^1(E_m)}\Big)^{1/2} \nonumber \\  
&\leq  O\big(\| \kappa \|_{L^{\infty}(\Omega)}^2\big) |I|,
%&\leq C \| (u^\kappa, U^\kappa) \|_\mathbb{H}^1 \leq  o(\| \kappa \|_\infty) |I|,
\end{align}
where the last inequality follows from \eqref{eq:ukappab} and \eqref{eq:cond_deriv} as in \cite[proof of Lemma~3.1]{Hanke11}.
The (re)employment of the continuous dependence on the Neumann data for the first equation of \eqref{eq:kappacemeqs} shows that
$$
\| u^\kappa \|_{H^s(\Omega \setminus \overbar{\Omega}_0)/\C} \leq C \big( \|f^\kappa \|_{H^{s-3/2}(\partial \Omega)} + \|g^\kappa \|_{H^{s-3/2}(\partial \Omega_0)} \big) \leq  O\big(\| \kappa \|_{L^{\infty}(\Omega)}^2\big) |I|
$$
due to \eqref{eq:f_cont2} and \eqref{eq:g_cont}. Combining this with \eqref{eq:cond_deriv}, the assertion easily follows.
\end{proof}

\begin{remark}
\label{remark}
By the same argument, one can also easily prove that 
$$
\| \big(u(\sigma),U(\sigma)\big) \|_{\mathbb{H}^s(\Omega\setminus \overbar{\Omega}_0)} \leq C |I|
$$
and 
$$
\| \big(u'(\sigma;\kappa), U'(\sigma;\kappa)\big) \|_{\mathbb{H}^s(\Omega\setminus \overbar{\Omega}_0)} \leq  C \| \kappa \|_{L^{\infty}(\Omega)}|I|
$$
for any $s < 2$ under the assumptions of Theorem~\ref{thm:efrechet}.
\end{remark}

%The assumption of extra smoothness for $\sigma$ close to $\partial \Omega$ utilized in the above theorem turns out to be so essential for our analysis that it merits a new definition
%$$
%\Sigma_{\partial \Omega}^{0,1} = \{ \sigma \in \Sigma \ | \ 
%\sigma|_{G \cap \Omega} \in  C^{0,1}(G \cap \Omega) \text{ for some open }  G \supset \partial \Omega \}.
%$$ 
%In addition, we denote by 
%$$
%K_\delta = \{\kappa \in K \ | \ {\rm dist}( {\rm \supp}\, \kappa, \partial \Omega) \geq \delta \}, \qquad \delta > 0, 
%$$
%a space of compactly supported conductivity perturbations.

It is obvious that the measurement map $R$ of the CEM, originally defined by \eqref{eq:measmat}, can be treated as an operator of two variables, the conductivity and the input current, that is,
\begin{equation}
\label{eq:measmat2}
R: (\sigma, I) \mapsto U(\sigma), \quad \Sigma \times \C^M_\diamond \to \C^M / \C.
\end{equation}
It follows trivially from \eqref{eq:cond_deriv} that $R$ is Fr\'echet differentiable with respect to the conductivity and the corresponding (bilinear) derivative
\begin{equation}
\label{eq:sigma_der}
\partial_\sigma R[\sigma]: K \times \C^M_\diamond \to \C^M / \C
\end{equation}
is defined by
$$
\partial_\sigma R[\sigma] (\kappa, I) = U'(\sigma; \kappa),
$$
where $U'(\sigma, \kappa)$ is the second part of the solution to \eqref{eq:vardercem}, with the underlying current pattern $I \in \C^M_\diamond$.

\subsection{Electrode shape derivatives}
We start by recalling from \cite{Darde12} a general way of perturbing the electrodes $\{ E_m \}_{m=1}^M$ with the help of $C^1$ vector fields living
on $\partial E$.
%if $n=2$, you may just think of collections of vectors at the end points of the  electrodes.
We denote
$$
\mathcal{B}_d \, = \, \{ a\in C^1(\partial E, \R^n) \ | \ 
\| a \|_{C^1(\partial E, \R^n)} < d \}
$$
and let $B(x,d) = \{ y \in \R^n \ | \ |y - x| < d \}$ be a standard open
ball in $\R^n$. Notice that in two spatial dimensions $\partial E$ consist only of the end points of the electrodes; in this degenerate case, we simply define $C^1(\partial E, \R^2)$ to be the space of $2M$-tuples of vectors supported, respectively, at those end points, with the corresponding norm defined, e.g., as the sum of the norms of the individual vectors.
For small enough $d > 0$,
$$
P_x:  B(x,d) \to \partial \Omega
$$
denotes the (nonlinear) projection that maps $y \in  B(x,d)$, which lies sufficiently close
to $x \in \partial E$, in the direction of  $\nu(x)$ onto $\partial \Omega$.
To make this definition unambiguous, it is also required that $P_x x = x$ and $P_x$ is continuous. It is rather obvious that $P_x$ is well defined for some 
$d = d(\Omega) > 0$ that can be chosen independently of $x \in \partial E$ 
due to a compactness argument. For each $a \in \mathcal{B}_d$, we introduce a perturbed set of `electrode boundaries',
\begin{equation}
\label{Eboundary}
\partial E_m^a \, = \, \{ z \in \partial \Omega \ | \ z = P_x(x + a(x)) \
{\rm for} \ {\rm some} \ x \in \partial E_m \}, \qquad m = 1, \dots , M. 
\end{equation}
According to \cite[Proposition 3.1]{Darde12}, there exist $d > 0$ such that for any  $a \in \mathcal{B}_d$, the formula \eqref{Eboundary} defines a set of well separated bounded and connected electrodes $E^a_1, \dots, E^a_M \subset \partial \Omega$ with $C^1$ boundaries. 

As a consequence, the measurement map of CEM, introduced originally in
\eqref{eq:measmat} and fine-tuned by \eqref{eq:measmat2}, can be further extended to be an operator of three variables, 
\begin{equation}
\label{eq:measmat3}
R: (a,\sigma, I) \mapsto U(a,\sigma), \quad \mathcal{B}_d \times \Sigma \times \C^M_{\diamond} \to \C^M / \C,
\end{equation}
where $U(a,\sigma)$ is the electrode potential component of the solution
$(u(a,\sigma), U(a,\sigma)) \in \mathbb{H}^1$ to (\ref{eq:cemeqs}) when the original electrodes
$\{ E_m \}_{m=1}^M$ are replaced by the perturbed ones $\{ E_m^a \}_{m=1}^M$.

By assuming some extra smoothness for the conductivity, i.e., interpreting
$$
R: \mathcal{B}_d \times \Sigma_{\partial \Omega}^{1,0} \times \C^M_{\diamond} \to \C^M / \C,
$$
it can be shown (cf.~\cite[Theorem~4.1]{Darde12}) that $R$ is Fr\'echet differentiable with respect to the first variable at the origin. The corresponding derivative 
$$
\partial_a R[0]= \partial_a R: C^1(\partial E, \R^n) \times \Sigma_{\partial \Omega}^{1,0}  \times \C^M_{\diamond} \to \C^M / \C,
$$ 
which is linear with respect to the first and the third variable,
can be sampled as indicated by the following proposition that is a slight generalization of \cite[Corollary~4.2]{Darde12}.

\begin{proposition}
\label{Ederivative}
Let $(\tilde{u}, \tilde{U}) \in \mathbb{H}^1$ be the solution of 
\eqref{eq:cemeqs} for a given electrode current pattern $\tilde{I} \in 
\C^M_{\diamond}$ and a conductivity $\sigma \in \Sigma_{\partial \Omega}^{1,0}$. Then, for any $a \in C^1(\partial E, \R^n)$ and  $I \in \C^M_{\diamond}$, it holds that
\begin{equation}
\label{eq:ederivative}
\partial_a R(a,\sigma,I) \cdot \tilde{I} = 
- \sum_{m=1}^M \frac{1}{z_m}\int_{\partial E_m}(a \cdot \nu_{\partial E_m})(U_m-u)
(\tilde{U}_m-\tilde{u}) \, {\rm d} s,
\end{equation}
where $\nu_{\partial E_m}$ is the exterior unit normal of $\partial E_m$ in the 
tangent bundle of $\partial \Omega$ and $(u,U) \in \mathbb{H}^1$ is the solution of \eqref{eq:cemeqs} for $I \in  \C^M_{\diamond}$ and $\sigma$.
\end{proposition}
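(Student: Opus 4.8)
The plan is to follow the strategy of \cite[Corollary~4.2]{Darde12} and reduce \eqref{eq:ederivative} to the variational characterization of $\partial_a R$ supplied by \cite[Theorem~4.1]{Darde12}. The starting point is the observation that, by the (bi)linearity of $R$ with respect to the current and by the symmetry of the measurement matrix (noted after \eqref{eq:measmat}), the quantity $\partial_a R(a,\sigma,I)\cdot\tilde I$ can be written as the pairing of the shape derivative of the forward solution driven by $I$ against the current $\tilde I$, or equivalently against the solution $(\tilde u,\tilde U)$ driven by $\tilde I$. Concretely, I would first write down the shape derivative $(u',U')=\partial_a(u,U)[0](a)$ of the CEM solution, which solves a variational problem of the form $B_\sigma((u',U'),(v,V)) = L_a(v,V)$ for all $(v,V)\in\mathbb{H}^1$, where $L_a$ is the linear functional coming from differentiating the electrode boundary terms of \eqref{eq:varcem} with respect to the perturbation field $a$; this functional was derived in \cite{Darde12}. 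Then $\partial_a R(a,\sigma,I)\cdot\tilde I = U'\cdot\overline{\tilde I} = B_\sigma((u',U'),(\tilde u,\tilde U)) = L_a(\tilde u,\tilde U)$, and the task becomes identifying $L_a(\tilde u,\tilde U)$ explicitly.

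To compute $L_a(\tilde u,\tilde U)$ I would differentiate the boundary contribution $\sum_m \frac{1}{z_m}\int_{E_m^a}(u-U_m)(\overline{v}-\overline{V}_m)\,{\rm d}S$ with respect to $a$ at $a=0$. Since only the domain of integration $E_m^a\subset\partial\Omega$ depends on $a$, and $E_m^a$ is obtained from $E_m$ by moving its boundary $\partial E_m$ according to the normal component of $a$ via the projection $P_x$, this is a standard Hadamard-type shape derivative of a surface integral over a moving region on a fixed manifold: the derivative is the boundary integral of the integrand against the normal speed $a\cdot\nu_{\partial E_m}$ along $\partial E_m$. This yields a term $\sum_m \frac{1}{z_m}\int_{\partial E_m}(a\cdot\nu_{\partial E_m})\,(u-U_m)(\tilde u-\tilde U_m)\,{\rm d}s$ plus contributions from differentiating $u$ and $v=\tilde u$ themselves; the latter contributions are exactly $B_\sigma$ applied to the material/shape derivatives, which are absorbed by the variational identity above and cancel, leaving only the explicit boundary term. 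Matching signs with the convention in \eqref{eq:cemeqs}–\eqref{eq:varcem} gives the stated formula. The only genuine extension over \cite[Corollary~4.2]{Darde12} is allowing $\sigma\in\Sigma_{\partial\Omega}^{1,0}$ rather than a smoother conductivity; here the relevant point is that $\sigma$ has a Lipschitz/$C^{1,0}$ representative near $\partial\Omega$, which by elliptic boundary regularity gives enough smoothness of $u$ and $\tilde u$ up to $\partial\Omega$ (in particular near the electrode edges $\partial E_m$) for the traces $(U_m-u)|_{\partial E_m}$ and $(\tilde U_m-\tilde u)|_{\partial E_m}$ appearing in \eqref{eq:ederivative} to be well defined as $L^2(\partial E_m)$ (indeed better) functions, so that the Hadamard formula is rigorous. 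This is essentially the same regularity input used in Theorem~\ref{thm:efrechet} and Remark~\ref{remark}.

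The main obstacle I anticipate is not the formal computation but the rigorous justification of the boundary shape-differentiation step under the reduced regularity: one must control the behaviour of $u$ and $\tilde u$ near the edges $\partial E_m$ of the electrodes, where the mixed (Robin/Neumann) boundary conditions of the CEM meet and the solution is known to be only of limited smoothness. I would handle this by invoking the local boundary regularity away from $\partial E$ together with the edge regularity estimates already available for the CEM (as used in \cite{Darde12,Darde13a} and in the trace estimates \eqref{eq:ukappab}–\eqref{eq:f_cont2} above), which guarantee that the products $(U_m-u)(\tilde U_m-\tilde u)$ restrict to integrable — in fact $H^1$ — functions on each $\partial E_m$, so that the one-parameter family of surface integrals over $E_m^a$ is differentiable in $a$ with the claimed derivative. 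Once this analytic point is secured, the identification of $L_a$ with the right-hand side of \eqref{eq:ederivative} is a direct calculation, and the proposition follows.
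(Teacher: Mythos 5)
Your proposal is correct and follows essentially the same route as the paper: the paper's proof simply invokes \cite[Corollary~4.2]{Darde12} (whose argument is the variational/Hadamard computation you outline) and observes that $C^{1}$ regularity of $\sigma$ near $\partial\Omega$ suffices to justify it, which is exactly the regularity point you address. Nothing further is needed.
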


\begin{proof}
The proof of the proposition is essentially the same as for \cite[Corollary~4.2]{Darde12}, where $\sigma$ was assumed to be smooth. However, it is straightforward to check that $C^{1}$ regularity close to $\partial \Omega$ is actually sufficient.
% (cf.~Remark~\ref{remark}).
\end{proof}

In the following section, we need to linearize the measurement map with respect to $\sigma$ in order to obtain a computationally feasible measure for the optimality of the electrode locations in the Bayesian framework. For this reason, we would like to find a formula for the Fr\'echet derivative of $\partial_\sigma R$ defined in \eqref{eq:sigma_der} with respect to a electrode perturbation field $a \in C^1(\partial E, \R^n)$. To avoid further technicalities, we settle for computing the derivatives in the reverse order without proving  symmetry of second derivatives.

\begin{theorem}
\label{secder}
The operator $\partial_a R: C^1(\partial E, \R^n) \times \Sigma_{\partial \Omega}^{1,0}  \times \C^M_{\diamond} \to \C^M / \C$ is Fr\'echet differentiable with respect to its second variable in the sense that
$$
\| \partial_a R(a,\sigma + \kappa, I) - \partial_a R(a,\sigma, I)
- \partial_\sigma \partial_a R[\sigma] (a,\kappa, I) \|_{\C^M /\C}
= O\big(\| \kappa \|_{L^{\infty}(\Omega)}^2\big) |I| \| a \|_{C^1(\partial E)}
$$
for all small enough $\kappa \in K_\delta$ with $\delta > 0$ fixed. The (tri)linear second derivative
$$
\partial_\sigma\partial_a R[\sigma]: C^1(\partial E, \R^n) \times K_\delta  \times \C^M_{\diamond} \to \C^M / \C
$$
at $\sigma \in \Sigma_{\partial \Omega}^{1,0}$ is defined as follows: For any $a \in C^1(\partial E, \R^n)$, $\kappa \in K_\delta$ and $I \in \C^M_{\diamond}$,
\begin{align}
\label{eq:esderivative}
\partial_\sigma \partial_a R[\sigma](a,\kappa,I) \cdot \tilde{I} &= 
- \sum_{m=1}^M \frac{1}{z_m} \int_{\partial E_m}(a \cdot \nu_{\partial E_m})\big((U'_m-u') (\tilde{U}_m-\tilde{u}) \nonumber \\
& \qquad \qquad \qquad \quad \  + (U_m-u) (\tilde{U}'_m-\tilde{u}')\big) {\rm d} s ,
\end{align}
where $(u(\sigma), U(\sigma)), (\tilde{u}(\sigma), \tilde{U}(\sigma))\in \mathbb{H}^1$  are the solutions of \eqref{eq:cemeqs} for $I, \tilde{I} \in  \C^M_{\diamond}$, respectively, and $(u'(\sigma;\kappa), U'(\sigma;\kappa)), (\tilde{u}'(\sigma;\kappa), \tilde{U}'(\sigma;\kappa)) \in \mathbb{H}^1$ are those of \eqref{eq:vardercem}.
%for $I, \tilde{I} \in  \C^M_{\diamond}$, respectively.
\end{theorem}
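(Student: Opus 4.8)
The plan is to differentiate the explicit sampling formula \eqref{eq:ederivative} of Proposition~\ref{Ederivative} with respect to $\sigma$, read off \eqref{eq:esderivative} from the product rule, and control the resulting remainder by invoking the boundary regularity established in Theorem~\ref{thm:efrechet} and Remark~\ref{remark}. Since the bilinear pairing $(v,\tilde I) \mapsto v \cdot \tilde I$ between $\C^M/\C$ and $\C^M_\diamond$ is well defined and non-degenerate, the quotient norm on $\C^M/\C$ is equivalent to $v \mapsto \sup\{\, |v \cdot \tilde I| : \tilde I \in \C^M_\diamond,\ |\tilde I| \leq 1 \,\}$, so it suffices to bound $|(\partial_a R(a,\sigma+\kappa,I) - \partial_a R(a,\sigma,I) - \partial_\sigma \partial_a R[\sigma](a,\kappa,I)) \cdot \tilde I|$ uniformly for such $\tilde I$. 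Note first that for $\kappa \in K_\delta$ small enough in $L^\infty(\Omega)$ the perturbed conductivity $\sigma + \kappa$ still belongs to $\Sigma_{\partial \Omega}^{1,0}$, since it coincides with $\sigma$ in a neighborhood of $\partial \Omega$ and satisfies \eqref{eq:sigma}; hence Proposition~\ref{Ederivative} applies to both $\sigma$ and $\sigma+\kappa$, and because $\Sigma_{\partial\Omega}^{1,0} \subset \Sigma_{\partial\Omega}^{0,1}$, so do Theorem~\ref{thm:efrechet} and Remark~\ref{remark} with the base conductivity $\sigma$.

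Next I would fix the solutions $(u,U) = (u(\sigma),U(\sigma))$ and $(u',U') = (u'(\sigma;\kappa),U'(\sigma;\kappa))$, together with their counterparts $(\tilde u, \tilde U)$, $(\tilde u', \tilde U')$ for $\tilde I$, and let $(u^\kappa, U^\kappa) \in \mathbb{H}^1$ be the second-order remainder of the conductivity linearization, exactly as in the proof of Theorem~\ref{thm:efrechet}, so that on each $E_m$
\[
U_m(\sigma+\kappa) - u(\sigma+\kappa) = (U_m - u) + (U'_m - u') + (U^\kappa_m - u^\kappa),
\]
and similarly with tildes. Substituting these decompositions into \eqref{eq:ederivative} written at $\sigma+\kappa$ and expanding the product, the term of combined order $0$ in $\kappa$ reproduces $\partial_a R(a,\sigma,I)\cdot\tilde I$, the two terms of combined order $1$ combine into $\partial_\sigma \partial_a R[\sigma](a,\kappa,I)\cdot\tilde I$ by the very definition \eqref{eq:esderivative}, and the quantity to be estimated collapses to
\[
-\sum_{m=1}^M \frac{1}{z_m}\int_{\partial E_m}(a\cdot\nu_{\partial E_m})\, \rho_m \, {\rm d}s,
\]
where $\rho_m$ is the sum of the six products of the factors $U_m - u,\ U'_m - u',\ U^\kappa_m - u^\kappa$ and their tildes whose combined order in $\kappa$ is at least two --- explicitly $(U^\kappa_m - u^\kappa)(\tilde U_m - \tilde u)$, $(U_m - u)(\tilde U^\kappa_m - \tilde u^\kappa)$, $(U'_m - u')(\tilde U'_m - \tilde u')$, and three further products of order $\geq 3$ in $\kappa$.

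Then I would estimate each of these products in $L^1(\partial E_m)$ by Cauchy--Schwarz, after bounding its factors in $L^2(\partial E_m)$. Fixing $s \in (3/2,2)$ and the auxiliary domain $\Omega_0 \Subset \Omega$ of Theorem~\ref{thm:efrechet} --- so that $\partial\Omega$, and with it every $E_m$ and $\partial E_m$, forms part of $\partial(\Omega \setminus \overbar{\Omega}_0)$ --- Remark~\ref{remark} and Theorem~\ref{thm:efrechet} give $\|(u,U)\|_{\mathbb{H}^s(\Omega\setminus\overbar{\Omega}_0)} \leq C|I|$, $\|(u',U')\|_{\mathbb{H}^s(\Omega\setminus\overbar{\Omega}_0)} \leq C\|\kappa\|_{L^\infty(\Omega)}|I|$ and $\|(u^\kappa,U^\kappa)\|_{\mathbb{H}^s(\Omega\setminus\overbar{\Omega}_0)} \leq C\|\kappa\|^2_{L^\infty(\Omega)}|I|$, and the same with $\tilde I$. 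For each pair, choosing a near-optimal constant $c$ in the quotient norm, the trace theorem places the boundary value of $u - c$ in $H^{s-1/2}(\partial\Omega) \subset H^1(\partial\Omega)$ (as $s-1/2>1$), a further trace of its restriction to $E_m$ lands in $L^2(\partial E_m)$, and combining this with the control of $|U_m - c|$ yields $\|(U_m - u)|_{\partial E_m}\|_{L^2(\partial E_m)} \leq C\|(u,U)\|_{\mathbb{H}^s(\Omega\setminus\overbar{\Omega}_0)}$, and analogously for $U'_m - u'$ and $U^\kappa_m - u^\kappa$. Hence $\|(U_m-u)|_{\partial E_m}\|_{L^2} \leq C|I|$, $\|(U'_m-u')|_{\partial E_m}\|_{L^2} \leq C\|\kappa\|_{L^\infty}|I|$ and $\|(U^\kappa_m-u^\kappa)|_{\partial E_m}\|_{L^2} \leq C\|\kappa\|^2_{L^\infty}|I|$, likewise with $\tilde I$. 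Each of the six products in $\rho_m$ thus has $L^1(\partial E_m)$-norm $O(\|\kappa\|^2_{L^\infty(\Omega)})|I||\tilde I|$ (three of them being of even higher order in $\kappa$ and absorbed for $\kappa$ small). Using $|a\cdot\nu_{\partial E_m}| \leq \|a\|_{C^1(\partial E)}$ pointwise, $|z_m| \geq c$ from \eqref{eq:z}, and summing over $m$, we obtain $|(\partial_a R(a,\sigma+\kappa,I) - \partial_a R(a,\sigma,I) - \partial_\sigma \partial_a R[\sigma](a,\kappa,I)) \cdot \tilde I| \leq C\|a\|_{C^1(\partial E)}\|\kappa\|^2_{L^\infty(\Omega)}|I||\tilde I|$; taking the supremum over $|\tilde I| \leq 1$ gives the asserted estimate, while the (tri)linearity of $\partial_\sigma\partial_a R[\sigma]$ is immediate from \eqref{eq:esderivative} and the linearity of $\kappa \mapsto (u'(\sigma;\kappa),U'(\sigma;\kappa))$.

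The main obstacle is that the integrals in \eqref{eq:ederivative}--\eqref{eq:esderivative} are supported on the lower-dimensional sets $\partial E_m \subset \partial\Omega$: the bare $\mathbb{H}^1$-bound \eqref{eq:cond_deriv} only yields $H^{1/2}(\partial\Omega)$ boundary regularity, which does not even guarantee a well-defined $L^2$-trace of the remainder $(u^\kappa,U^\kappa)$ on $\partial E_m$. This is precisely the gap filled by Theorem~\ref{thm:efrechet} and Remark~\ref{remark}; once they are available, the rest of the argument is the routine product-rule expansion and Cauchy--Schwarz bookkeeping described above, the only additional care being the compatible choice of $s \in (3/2,2)$ and $\Omega_0$ and the observation that ${\rm supp}\,\kappa$ stays at distance $\geq \delta$ from $\partial\Omega$, so that $\sigma+\kappa$ remains in $\Sigma_{\partial\Omega}^{1,0}$.
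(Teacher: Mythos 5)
Your proposal is correct and follows essentially the same route as the paper: differentiate the sampling formula \eqref{eq:ederivative}, control the $L^2(\partial E_m)$-traces of the zeroth-, first- and second-order (remainder) contributions via Theorem~\ref{thm:efrechet}, Remark~\ref{remark} and the trace theorem, and conclude with the Schwarz inequality and the duality of $\C^M_\diamond$ with $\C^M/\C$. Your explicit six-term product expansion (and the check that $\sigma+\kappa$ stays in $\Sigma_{\partial\Omega}^{1,0}$) is simply the detailed bookkeeping behind the paper's estimates \eqref{eq:apu}--\eqref{eq:apu3} and its concluding ``easily deduced'' step.
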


\begin{proof}
To begin with, we note that \eqref{eq:esderivative} is a proper definition of a (bounded, trilinear)  operator from $C^1(\partial E, \R^n) \times K_\delta  \times \C^M_{\diamond}$ to $\C^M / \C$ since $\C^M / \C$ is finite-dimensional and its dual is realized by $\C^M_\diamond$.~(Notice also that the right-hand side of \eqref{eq:esderivative} is well defined and depends boundedly on $a$, $\kappa$ and $I$ by virtue of Remark~\ref{remark}, the trace theorem and the Schwarz inequality; cf.~the estimates below.)

By the triangle inequality  it holds for any $c \in \C$ that
\begin{align}
\label{eq:apu}
\big\| & \big( U_m(\sigma + \kappa ) - u(\sigma + \kappa ) \big) - \big(U_m(\sigma ) - u(\sigma  ) \big) -  \big(U_m'(\sigma;\kappa) - u'(\sigma;\kappa)\big)
\big\|_{L^2} \\[1mm]
& \leq \big\| U_m(\sigma + \kappa ) - U_m(\sigma) -  U_m'(\sigma;\kappa) - c \big\|_{L^2} + \big\| u(\sigma + \kappa ) - u(\sigma) -  u'(\sigma;\kappa) - c \big\|_{L^2} \nonumber
\end{align}
where the $L^2$-norms are taken over $\partial E_m$ for an arbitrary $m=1,\dots, M$.
Furthermore, two applications of the trace theorem induce the estimate
\begin{equation}
\label{eq:apu2}
\big\| u(\sigma + \kappa ) - u(\sigma) -  u'(\sigma;\kappa) - c \big\|_{L^2(\partial E)} \!\leq
C  \| u(\sigma + \kappa ) - u(\sigma) -  u'(\sigma;\kappa) - c \big\|_{H^{1+\eta}(G)},
\end{equation}
where $\eta > 0$, the subset $G \subset \Omega$ is an arbitrary interior neighborhood of $\partial \Omega$ and $C = C(\eta,G,E) > 0$.
Since $\kappa \in K_\delta$ and $\sigma \in \Sigma_{\partial \Omega}^{1,0}$ satisfy the assumptions of Theorem~\ref{thm:efrechet}, combining \eqref{eq:apu} with \eqref{eq:apu2} and taking the infimum over $c \in \C$ yields
\begin{equation}
\label{eq:apu3}
\big\|  \big( U_m(\sigma + \kappa ) - u(\sigma + \kappa ) \big) - \big(U_m(\sigma ) - u(\sigma  ) \big) -  \big(U_m'(\sigma;\kappa) - u'(\sigma;\kappa)\big)
\big\|_{L^2}
\leq O\big(\| \kappa \|_{L^{\infty}(\Omega)}^2\big) |I|,
\end{equation}
where the $L^2$-norm is once again taken over $\partial E_m$. Obviously, an analogous estimate is valid when $I$ is replaced by $\tilde{I}$ and the potentials, together with their derivatives, are changed accordingly.

Using the sampling formulas \eqref{eq:ederivative} and  \eqref{eq:esderivative} together with \eqref{eq:apu3} and the Schwarz inequality, it can easily be deduced that 
$$
\big | \big(\partial_a R'(a,\sigma + \kappa,I) - \partial_a R'(a,\sigma,I) - \partial_\sigma \partial_a R'[\sigma](a,\kappa,I) \big) \cdot \tilde{I} \big| \leq O\big(\| \kappa \|_{L^{\infty}(\Omega)}^2\big) |I| |\tilde{I}|\| a \|_{\infty},
$$
for all $\tilde{I} \in \C^M_\diamond$. Since $\C^M_\diamond$ is the dual of $\C^M / \C$, this completes the proof.
\end{proof}

\section{Bayesian inversion and optimal electrode positions}
\label{sec:Bayes}
In the Bayesian approach \cite{KaipioSomersalo} to inverse problems all parameters that exhibit uncertainty are modelled as random variables. Each quantity of interest is given a {\em prior} probability distribution which reflects (a part of) the available information about it before the measurement. The measurement itself is modelled as a realization of a compound random variable depending on,~e.g.,~model parameters and noise. Under suitable assumptions, by using the Bayes' formula for conditional probability, one obtains the {\em posterior} probability density in which the updated information about the parameters of interest is encoded. The practical problem is to develop numerical methods for exploring the posterior distribution. In this section we revisit these concepts in the framework of the CEM, the aim being to derive the desired posterior covariance related optimality criteria for the electrode positions. 

By means of discretization, let us model the conductivity by a finite dimensional real-valued random variable and denote its generic realization by $\sigma \in \R^m$. Since in most applications it is reasonable to inject several linearly independent electrode currents $\{I^{(j)}\}_{j=1}^{N} \subset \R_\diamond^M$ into $\Omega$ and measure the resulting (noisy) electrode voltages $\{V^{(j)}\}_{j=1}^N \subset \R^M$, we employ the shorthand notations 
\[
\Ical = [(I^{(1)})^{\rm T}, (I^{(2)})^{\rm T}, \ldots, (I^{(N)})^{\rm T}]^{\rm T}, \quad \Vcal = [(V^{(1)})^{\rm T}, (V^{(2)})^{\rm T}, \ldots, (V^{(N)})^{\rm T}]^{\rm T}
\]
for the total current injection and electrode potential patterns, respectively. Here the measurement $\Vcal$ is modelled as a realization of
%mutually independent 
a random variable that takes values in $\R^{MN}$ and depends intimately on the forward solution 
\[
\Ucal(\sigma) = \Rcal(\sigma,\Ical) = [R(\sigma,I^{(1)})^{\rm T}, R(\sigma,I^{(2)})^{\rm T}, \ldots, R(\sigma,I^{(N)})^{\rm T}]^{\rm T},
\] 
where the current-to-voltage map $R(\cdot, \cdot)$ is defined as in \eqref{eq:measmat2}. Since we are interested in considering the electrode configuration as a variable, we denote an admissible set of electrodes by $\Ecal = \{E_1,E_2,\ldots, E_M\}$. 

Suppose that our prior information on the conductivity is encoded in the probability density $p_{\rm pr}(\sigma)$ and the {\em likelihood} density $p(\Vcal|\sigma; \Ical,\Ecal)$ is known. According to the Bayes' formula, the posterior density for $\sigma$ is given by
\begin{equation}
\label{eq:posterior}
p(\sigma|\Vcal;\Ical,\Ecal) = \frac{p(\Vcal|\sigma;\Ical,\Ecal)\, p_{\rm pr}(\sigma)}{p(\Vcal;\Ical,\Ecal)},
\end{equation}
where the density of $\Vcal$ is obtained via marginalization:
$$
p(\Vcal;\Ical,\Ecal) = \int p(\sigma,\Vcal;\Ical,\Ecal)\, {\rm d} \sigma 
= \int \, p(\Vcal|\sigma;\Ical,\Ecal) \, p_{\rm pr}(\sigma)\,{\rm d} \sigma .
$$
%By Bayes' formula, the joint density is given by
%\begin{equation}\label{eq:joint}
%p(\sigma,\Vcal;\Ical,\Ecal) = p(\Vcal|\sigma;\Ical,\Ecal)p_{\rm pr}(\sigma)
%\end{equation}
%and hence, again by Bayes' formula, we know the posterior density 
%\begin{equation}\label{eq:posterior}
%p(\sigma|\Vcal;\Ical,\Ecal) = \frac{p(\sigma,\Vcal;\Ical,\Ecal)}{p(\Vcal;\Ical,\Ecal)}, \quad p(\Vcal;\Ical,\Ecal) = \int p(\sigma,\Vcal;\Ical,\Ecal){\rm d} \sigma.
%\end{equation}
In particular, the posterior density can be used to define different point estimates such as the (possibly non-unique) {\em maximum a posteriori} (MAP) estimate
\begin{equation}\label{map}
\hat{\sigma}_{\rm MAP}(\Vcal;\Ical,\Ecal) = \arg \max_{\sigma} \, p(\sigma|\Vcal;\Ical,\Ecal)
\end{equation}
and the {\em conditional mean} (CM) estimate
\begin{equation*}%\label{eq:cm}
\hat{\sigma}_{\rm CM}(\Vcal;\Ical,\Ecal) = \int \sigma p(\sigma|\Vcal;\Ical,\Ecal)\, {\rm d}\sigma.
\end{equation*}
Often, these (and other) point estimates cannot be obtained as closed form solutions to some deterministic problems, but instead, they require implementation of, e.g., 
%Markov chain 
{\em Monte Carlo} (MC) type sampling methods or high-dimensional numerical optimization schemes~\cite{Kaipio2000,KaipioSomersalo}. 
%In this sense, from the Bayesian point of view, rather than any specific estimate on the conductivity, it is the whole posterior density $p(\sigma|\Vcal;\Ical,\Ecal)$ which is viewed as the solution to the inverse problem of CEM \cite{KaipioSomersalo}.  

Next we move on to consider criteria for optimal experiment design. 
%To no surprise, 
Our general approach is to form a suitable functional reflecting the feasibility of $\Ecal$ and define its minimizer/maximizer to be the {\em optimal electrode positions}.
%optimal electrode positions as an argument of the optimum of the functional. 
From the point estimate perspective, the most intuitive choice for the to-be-minimized functional is, arguably, of the mean square error type \cite{Kaipio04,Kaipio07}, 
\begin{align}\label{eq:mse}
\psi: (\Ical,\Ecal)  \mapsto & \int \left(\int | A (\sigma - \hat{\sigma}(\Vcal;\Ical,\Ecal))|^2 p(\sigma|\Vcal;\Ical,\Ecal)\, {\rm d}\sigma \right) p(\Vcal;\Ical,\Ecal)\, {\rm d}\Vcal  \\ 
&\,= {\rm tr}\left(\int\int A(\sigma - \hat{\sigma}(\Vcal;\Ical,\Ecal))(\sigma - \hat{\sigma}(\Vcal;\Ical,\Ecal))^{\rm T}A^{\rm T} p(\sigma,\Vcal;\Ical,\Ecal)\,{\rm d}\sigma {\rm d}\Vcal\right) \nonumber,
\end{align}
where $A$ is the chosen weight matrix and $\hat{\sigma}$ is the point estimate of interest. (Although the numerical experiments in Section~\ref{sec:numer} only consider optimization of the electrode locations, in this section $\psi$ is interpreted as a function of both $\Ical$ and $\Ecal$, which reflects the fact that the applied current patterns and the electrode positions could in principle be optimized simultaneously.) 
%It is instructive to take note that in the case $A^{\rm T}A$ is invertible, a simple calculation shows that the choice $\hat{\sigma} = \hat{\sigma}_{\rm CM}$ minimizes the quantity \eqref{eq:mse} for fixed $(\Ical,\Ecal)$. 
In optimal experiment design \cite{Atkinson2007,Chaloner1995}, the minimization of the functional \eqref{eq:mse} is often called the A-optimality criterion; intuitively, the A-optimal design minimizes the variation of $\sigma$  around the considered point estimate $\hat{\sigma}$ in the (semi)norm induced by the positive (semi)definite matrix $A^{\rm T} A$.

 %Nevertheless, functionals different from \eqref{eq:mse} could also be propitious. 
Another approach is to compare the prior and posterior distributions directly without committing oneself to a specific point estimate. 
%A possible alternative for the functional is the Kullback--Leibler divergence of the prior from the posterior
As an example, the maximization of the information gain when the prior is replaced with the posterior leads to the Kullback--Leibler divergence of the prior from the posterior,
\begin{equation}\label{eq:kl}
\psi : (\Ical,\Ecal) \mapsto \int \left(\int \log\left(\frac{p(\sigma|\Vcal;\Ical,\Ecal)}{p_{\rm pr}(\sigma)}\right)p(\sigma|\Vcal;\Ical,\Ecal)\,{\rm d}\sigma\right) p(\Vcal;\Ical,\Ecal) \, {\rm d} \Vcal.
\end{equation}
%Maximization of \eqref{eq:kl} would conceptually correspond to maximization of the information gain when the prior is replaced with the posterior; 
In optimal experiment design the maximization of the functional \eqref{eq:kl} is known as the D-optimality \cite{Atkinson2007,Chaloner1995}. Note that by the Fubini's theorem and the Bayes' formula we have  
\begin{align}\label{eq:kls}
%\int \left(\int \log (p_{\rm pr}(\sigma)) p(\sigma|\Vcal;\Ical,\Ecal) {\rm d}\sigma\right) p(\Vcal;\Ical,\Ecal){\rm d}\Vcal &= \int \left(\int \log (p_{\rm pr}(\sigma)) p(\Vcal|\sigma) {\rm d}\Vcal\right) p_{\rm pr}(\sigma){\rm d}\sigma \nonumber \\
\int \log (p_{\rm pr}(\sigma)) \, p(\sigma, \Vcal;\Ical,\Ecal) \, {\rm d}\sigma {\rm d}\Vcal &= \int \left(\int  p(\Vcal|\sigma;\Ical,\Ecal) \, {\rm d}\Vcal\right) \log (p_{\rm pr}(\sigma))\,p_{\rm pr}(\sigma)\, {\rm d}\sigma \nonumber \\
& = \int \log(p_{\rm pr}(\sigma))\,p_{\rm pr}(\sigma)\, {\rm d}\sigma,
\end{align}
which is independent of the pair $(\Ical,\Ecal)$. In consequence, $p_{\rm pr}(\sigma)$ can be dropped from the denominator in \eqref{eq:kl} without affecting the maximizer with respect to $(\Ical,\Ecal)$.
%the corresponding term in \eqref{eq:kl} is independent of the electrode configuration. 

%In most practically interesting situations only the likelihood density depends on $(\Ical,\Ecal)$. 
If the measurement is of the form $\Vcal = \varphi(\Ucal(\sigma),\varepsilon)$, where $\varepsilon$ models the noise and $\varphi$ is differentiable, then the results of Section~\ref{sec:CEM} could in principle be used to build a gradient based optimization algorithm for minimizing \eqref{eq:mse} or maximizing \eqref{eq:kl}. In particular, the most commonly used additive noise models fall into this framework. However, such an approach could easily end up being extremely expensive computationally due to the extensive MC sampling required for evaluating the feasibility functional as well as its derivatives. For this reason, we move on to derive closed form expressions for  \eqref{eq:mse} and \eqref{eq:kl} in case the prior and the noise process are Gaussian, and the current-to-potential map is linearized with respect to the conductivity.

\subsection{Gaussian models with linearization}

For both clarity and simplicity,  we omit writing the $(\Ical,\Ecal)$-dependence explicitly in the following. 
%According to the additive noise model, a single measurement is modelled as
We choose an additive noise model
\begin{equation}\label{measurement}
\Vcal = \Ucal(\sigma) + \varepsilon,
\end{equation}
where $\varepsilon \in \R^{MN}$ is a realization of a zero mean Gaussian random variable with the covariance matrix $\Gamma_{\rm noise}$. Assuming that the prior is also Gaussian with the mean $\sigma_\ast$ and the covariance matrix $\Gamma_{\rm pr}$, it follows from \eqref{eq:posterior} that the posterior density satisfies
\begin{equation}\label{eq:gausspost}
p(\sigma|\Vcal) \propto \exp \left( -\frac{1}{2}(\Ucal(\sigma)-\Vcal)^{\rm T}\Gamma_{\rm noise}^{-1}(\Ucal(\sigma)-\Vcal) -\frac{1}{2} (\sigma-\sigma_\ast)^{\rm T}\Gamma_{\rm pr}^{-1}(\sigma-\sigma_\ast)\right),
\end{equation}
where the constant of proportionality is independent of $\sigma$ but depends in general on $\Vcal$, $\Ical$ and $\Ecal$. We also assume that both of the needed inverse covariance matrices exist. 
%(the actual prior covariances $\Gamma_{\rm pr}$ need not to exist; see,~e.g.,~\cite{KaipioSomersalo} for more on {\em improper priors}). 

In order to evaluate the integrals in \eqref{eq:mse} and \eqref{eq:kl} explicitly, we linearize the current-to-voltage map centered at the prior mean, i.e., we apply
\begin{equation}\label{eq:J0}
\Ucal(\sigma) \approx \Ucal(\sigma_\ast) + \Jcal_\ast(\sigma-\sigma_\ast),
\end{equation}
where $\Jcal_\ast := \Jcal(\sigma_\ast)$ is the matrix representation of $ \kappa \mapsto \Ucal'(\sigma_\ast;\kappa) = \partial_\sigma \Rcal[\sigma_\ast](\kappa,\Ical)$ (cf.~\eqref{eq:sigma_der}). As a result, we obtain an approximate posterior density 
\begin{equation}\label{eq:p0}
p_\ast(\sigma|\Vcal) \propto \exp\left( -\frac{1}{2}(\Jcal_\ast\sigma - \Vcal_\ast)^{\rm T}\Gamma_{\rm noise}^{-1}(\Jcal_\ast\sigma - \Vcal_\ast) -\frac{1}{2} (\sigma-\sigma_\ast)^{\rm T}\Gamma_{\rm pr}^{-1}(\sigma-\sigma_\ast) \right),
\end{equation}
where $\Vcal_\ast := \Vcal - \Ucal(\sigma_\ast) + \Jcal_\ast \sigma_\ast $. 
%Fixing $\hat{\sigma} = \hat{\sigma}_{\rm CM}$  with respect to $p_\ast$)
Notice that $p_\ast(\, \cdot \,|\Vcal)$ is a product of two multivariate normal densities and thus a multivariate Gaussian itself. By completing the squares with respect to $\sigma$, the covariance matrix and the mean of $p_\ast(\, \cdot \, | \Vcal)$ can be written as
\begin{equation}\label{eq:Gamma0}
\Gamma_\ast = \big(\Jcal_\ast^{\rm T}\Gamma_{\rm noise}^{-1}\Jcal_\ast + \Gamma_{\rm pr}^{-1}\big)^{-1} \qquad {\rm and}
\qquad \hat{\sigma}_\ast = \Gamma_\ast \Jcal_\ast^{\rm T} \Gamma_{\rm noise}^{-1} \Vcal_\ast, 
\end{equation}
respectively \cite{KaipioSomersalo}.
In particular, this means that we altogether have
$$
p_\ast(\sigma|\Vcal) =
\frac{1}{\sqrt{ (2 \pi)^m \det(\Gamma_\ast)} } \exp\left( -\frac{1}{2} (\sigma - \hat{\sigma}_\ast)^{\rm T} \Gamma_\ast^{-1} (\sigma - \hat{\sigma}_\ast) \right)
$$
due to the normalization requirement of probability densities.

Let us choose $\hat{\sigma}(\Vcal) = \hat{\sigma}_\ast$ as our point estimate of interest. Replacing $p(\sigma | \Vcal)$ by $p_\ast(\sigma|\Vcal)$ and $p(\Vcal)$ by the density of $\Vcal$ corresponding to the linearized model, say, $p_\ast(\Vcal)$ in \eqref{eq:mse}, we get a new, simpler feasibility functional
\begin{align}\label{eq:msegauss}
\psi: (\Ical, \Ecal) \mapsto  & \ {\rm tr} \left( \int 
\left(\int  
A(\sigma - \hat{\sigma}_\ast)(\sigma - \hat{\sigma}_\ast)^{\rm T}A^{\rm T} p_\ast(\sigma|\Vcal)\, {\rm d}\sigma 
\right) 
p_\ast(\Vcal)\, {\rm d}\Vcal\right) \nonumber \\
& \, = {\rm tr}\left(\int A\Gamma_\ast A^{\rm T} p_\ast(\Vcal) \, {\rm d}\Vcal\right) = {\rm tr}\big(A\Gamma_\ast A^{\rm T}\big),
\end{align}
where we used the independence of $\Gamma_\ast$ from  $\Vcal$. Take note that the right-hand side of \eqref{eq:msegauss} can be evaluated with a reasonable computational effort. 
%Note that for a general estimate $\hat{\sigma}$, an analogous expression remains valid with $A\Gamma_\ast A^{\rm T}$ corrected by adding the covariance matrix between $\hat{\sigma}$ and the CM estimate. 
%In addition, by \eqref{eq:kls} and the Gaussian assumptions, the expected information gain \eqref{eq:kl} can be evaluated as
Performing the same simplifications in \eqref{eq:kl}, we get another computationally attractive functional:
\begin{align}\label{eq:klgauss}
\psi: (\Ical, \Ecal) \ \mapsto  & \int \left(\int \log\left(\frac{p_\ast(\sigma|\Vcal)}{p_{\rm pr}(\sigma)}\right)p_\ast(\sigma|\Vcal) {\rm d}\sigma\right) p_\ast(\Vcal){\rm d}\Vcal \nonumber \\
& \, = \int \left(\int \log\left(p_\ast(\sigma|\Vcal)\right)p_\ast(\sigma|\Vcal) {\rm d}\sigma\right) p_\ast(\Vcal)\,{\rm d}\Vcal + C \nonumber  \\
& \, = - \int \log \left( \sqrt{(2 \pi e)^m  \det(\Gamma_\ast)} \right) p_\ast(\Vcal) \, {\rm d} \Vcal + C \nonumber \\
& \, = -\frac{1}{2} \log \det(\Gamma_\ast) + C',
\end{align}
where the first step follows from the logic \eqref{eq:kls} and the second one from the known form of the {\em differential entropy} for a multivariate Gaussian (cf.~\cite{Lazo78}). Furthermore, $C$ and $C'$ are scalars that are independent of the current pattern and the electrodes. Hence, for the linearized model, maximizing the information gain is equivalent to minimizing $\log \det (\Gamma_\ast)$. 

%Let us subsequently proceed to the algorithmic implementation of the minimization of the above introduced simplified

\section{Algorithmic implementation}
\label{sec:alg}
In this section we introduce our electrode position optimization algorithm in two spatial dimensions. To this end, suppose that $\Omega$ is star-shaped and can thus be parametrized by a $2\pi$-periodic simple closed curve $\gamma \colon \R \to \R^2$ with respect to the polar angle.
%suppose that the object of interest $\Omega \subset \R^2$ is smooth enough and 
Each electrode in the configuration $\Ecal = \{E_1,E_2,\ldots, E_M\}$ is composed of an open arc segment on $\partial\Omega$, %In addition, we suppose that $\partial\Omega$ is characterized by a $2\pi$-periodic simple closed curve $\gamma \colon \R \to \R^2$. 
meaning that $E_m$ is determined by the pair of its extremal polar angles $\theta_m^- < \theta_m^+ $. We denote the full angular parameter by $\theta = [\theta^-, \theta^+]^{\rm T}\in \R^{2M}$ where $\theta^\pm = [\theta^\pm_1,\theta^\pm_2,\ldots,\theta^\pm_M]^{\rm T}$. Given that a particular $\theta$ defines an admissible electrode configuration, we may denote the dependence of any function on the electrodes via this parameter. 

In order to build a gradient based optimization algorithm for \eqref{eq:msegauss} or \eqref{eq:klgauss}, we (approximately) calculate the derivatives ${\partial\Jcal}/{\partial\theta_m^\pm}$ (cf.~\eqref{eq:J0} and \eqref{eq:Gamma0}) by applying the reverse order second derivative formula \eqref{eq:esderivative} in the form
\begin{equation}\label{eq:tsderivative}
\begin{split}
\tilde{\Ical} \cdot \frac{\partial^2\Ucal}{\partial\sigma_k\partial\theta_m^\pm} = &\mp \sum_{j=1}^N |\dot{\gamma}\circ \gamma^{-1}|\bigg( \frac{\partial U_m^{(j)}}{\partial\sigma_k} - \frac{\partial  u^{(j)} }{\partial\sigma_k} \bigg)\big(\tilde{U}^{(j)} - \tilde{u}^{(j)} \big)\Bigg|_{\gamma(\theta_m^\pm)} \\
& \mp \sum_{j=1}^N |\dot{\gamma}\circ \gamma^{-1}| \big(U^{(j)} - u^{(j)} \big)
\bigg( \frac{\partial \tilde{U}_m^{(j)}}{\partial\sigma_k} - \frac{\partial \tilde{u}^{(j)}}{\partial\sigma_k} \bigg)\Bigg|_{\gamma(\theta_m^\pm)},
\end{split}
\end{equation}
where $\dot{\gamma}$ is the derivative of $\gamma$. The pairs $(u^{(j)},U^{(j)})$ and $(\tilde{u}^{(j)},\tilde{U}^{(j)})$ are the solutions to \eqref{eq:cemeqs} for the $j$th current inputs $I^{(j)}$ and $\tilde{I}^{(j)}$ in the associated total current vectors $\Ical$ and $\tilde{\Ical}$, respectively. Furthermore, $((\partial u^{(j)})/(\partial \sigma_k),(\partial U^{(j)})/(\partial \sigma_k))$ and $((\partial \tilde{u}^{(j)})/(\partial \sigma_k),(\partial \tilde{U}^{(j)})/(\partial \sigma_k))$ are the corresponding solutions to \eqref{eq:vardercem} with the conductivity perturbation $\kappa$ being the $k$th component of the discretized $\sigma$. Notice that the integrals over the boundaries of the electrodes in \eqref{eq:esderivative} are reduced to point evaluations at the electrode edges in \eqref{eq:tsderivative}, which is the correct interpretation in two dimensions (cf.~\cite{Darde12}), and the choice $a \cdot \nu_{\partial E_m} = \pm |\dot{\gamma}\circ \gamma^{-1}|$ results in the derivatives with respect to the polar angles of the electrode end points.

In order to make use of the differentiation formula \eqref{eq:tsderivative} in practice, we are forced to carry out several technicalities. First of all, approximate solutions to \eqref{eq:cemeqs} are computed using piecewise linear {\em finite elements} (FE) in a polygonal domain $\Omega_{\rm poly}\approx \Omega$ with a triangulation $\mathcal{T}$. Since 
%the number of elements in 
$\Omega_{\rm poly}$ and especially $\mathcal{T}$ can vary significantly depending on the electrode locations, we choose a fixed `background domain' $D \supset \Omega,\Omega_{\rm poly}$ with a homogeneous triangulation to act as a storage for an `extended conductivity' $\varsigma$ which is parametrized using piecewise linear basis functions on $D$. The values of $\varsigma$ are carried over to $\Omega_{\rm poly}$ by a projection matrix $P$, and the projected conductivity $P \varsigma$ is used in the approximation of $(u^{(j)}(\sigma),U^{(j)}(\sigma))$ and $(\tilde{u}^{(j)}(\sigma),\tilde{U}^{(j)}(\sigma))$ with $\sigma = \varsigma|_\Omega$; for detailed instructions on the assembly of the FE scheme for the CEM, see,~e.g.,~\cite{Vauhkonen99}. Secondly, we note that the FE approximations are also differentiable with respect to the variable $P\varsigma$, and the associated derivatives are determined by a formula analogous to \eqref{eq:vardercem} with the variational space replaced by the FE space in question. Finally, we employ the sampling formula \eqref{eq:tsderivative} with the exact solutions on the right-hand side replaced by their FE counterparts  
%in spite of the multifaceted lack of regularity. 
even though there exists no proof that such an approximation converges to the desired quantity when the FE discretization gets finer (cf.~\cite[Remark~2.4]{Darde13b}) --- in fact, based on Proposition~\ref{Ederivative}, we do not even know that the continuum derivatives exist if the support of the conductivity perturbation touches the boundary. However, these theoretical imperfections did not affect the stability of the numerical experiments in Section~\ref{sec:numer}.
To sum up, using the above scheme, we obtain a numerical approximation for $ \Jcal$ as well as for ${\partial\Jcal}/{\partial\theta_m^\pm}$ (cf.~\eqref{eq:J0}). 

%It is also worth of mentioning that based on numerical observations, we deliberately omit $\theta$-dependence of $P$ in the above approximations. 
In the following we assume that the electrode widths are fixed. Extending the proposed approach to the case where the electrode sizes are optimized instead/in addition to the electrode positions is a straightforward task. This assumption implies a dependence $\theta_m^+ = \theta_m^+(\theta_m^-)$. Differentiating the arc length formula, one obtains
\begin{equation}\label{eq:leibniz}
\frac{{\rm d}\theta_m^+}{{\rm d}\theta_m^-} = \frac{|\dot{\gamma}(\theta_m^-)|}{|\dot{\gamma}(\theta_m^+)|},
\end{equation}
which can be incorporated into all needed gradients via the chain rule. 
%Having revised the computational aspects, we define have the following algorithm:
After this observation, we are finally ready to introduce our numerical algorithm for optimizing the feasibility functionals \eqref{eq:msegauss} and \eqref{eq:klgauss} with respect to the electrode positions.
\bigskip

\begin{algorithm}\label{alg:steep}
A steepest descent algorithm for finding the optimal electrode positions:
\vspace{12pt}
\begin{itemize}

\item[(0)] Fix the initial set of constant parameters: $z$ (contact impedances), $\Ical$ (electrode currents), $D$ and $\Omega$ (background domain and object), $\sigma_\ast$ (prior mean), $\Gamma_{\rm pr}$ (prior covariance) and $\Gamma_{\rm noise}$ (noise covariance). Choose the staring point for the iteration $\theta^- = \theta_{\rm init}^-$. 
\\[2pt]

\item[(1)] Select the desired cost functional; our choice is (cf.~\eqref{eq:msegauss} and \eqref{eq:klgauss})
\begin{equation}\label{eq:cost}
\psi(\theta^-) = \alpha\sum_{m=1}^M\frac{1}{g_m(\theta^-)} + \begin{cases} {\rm tr}(\Gamma_\ast(\theta^-)), \quad {\it or} \\[2pt] \log\det(\Gamma_\ast(\theta^-)), \;\; \end{cases} 
\end{equation}
where $\alpha > 0$ is a manually picked constant and $g_m$ is the length of the gap between the $m$th and $(m+1)$th electrode (modulo $M$). The first term in \eqref{eq:cost} is included to prevent the electrodes from getting too close together.
\\[2pt]

\item[(2)] Evaluate $\psi(\theta^-)$ and use the above numerical scheme (based on \eqref{eq:tsderivative} and \eqref{eq:leibniz}) to compute each $\partial\Jcal/\partial\theta_m^-$. Subsequently, by applying well-known differentiation formulas to \eqref{eq:Gamma0} and \eqref{eq:cost}, calculate each $\partial\psi/\partial\theta_m^-$ and build the gradient $ \nabla \psi (\theta^-) $. The new iterate is defined as
\[
\theta^-_{\rm new} 
:= \theta^- - t_{\rm min} \frac{\nabla\psi(\theta^-)}{|\nabla\psi(\theta^-)|}, \qquad  
t_{\rm min} := \arg \min_{t\in \triangle} \, \psi\bigg(\theta^- - t\frac{\nabla\psi(\theta^-)}{|\nabla\psi(\theta^-)|}\bigg),
\]
where the minimum is sought by a line search routine and $\triangle \subset [0,\infty)$ is chosen so that the gaps between the electrodes remain positive. Take note that within the line search, each evaluation of $\psi$ (cf.~\eqref{eq:cost}) requires construction of a FE mesh corresponding to the electrode positions specified by the input.
\\[2pt]

\item[(3)] Unless satisfactory convergence is observed, set $\theta^- = \theta^-_{\rm new}$, generate a corresponding $\Omega_{\rm poly}$, and repeat phase (2).
%Generate $\Omega_{\rm poly}$ corresponding to $\theta^-_{\rm iter}$ and repeat phase (2) until satisfactory convergence is observed.
\\[2pt]

\item[(4)] Return $\theta^-_{\rm opt} = \theta^-_{\rm new}$.

\end{itemize}
\end{algorithm}
\bigskip 

In our numerical experiments, we set exclusively $\alpha = 10^{-4}$, but most of the results would be qualitatively the same even if $\alpha=0$. However, a positive $\alpha$ increases the speed of convergence exhibited by Algorithm~\ref{alg:steep}. We have chosen the weight matrix $A$ of the A-optimality condition to be the identity matrix in \eqref{eq:cost}, which means that all node values of the conductivity on the uniform triangulation of the background domain $D$ are considered equally important. Note also that in the floating point arithmetic there is a problem of overflow/underflow when evaluating determinants of large matrices. This can fortunately be circumvented by using the Cholesky decomposition $\Gamma_\ast=L L^{\rm T}$ of the symmetric and positive definite posterior covariance matrix. Indeed, 
\[
\log \det (\Gamma_\ast) = 2 \log \det(L) = 2\sum_i \log (l_{ii}),
\]
where $ l_{ii}$ are the diagonal entries of the triangular Cholesky factor $L$. This trick stabilizes the treatment of the second cost function in \eqref{eq:cost} considerably. 
%Here is utilized the fact that the determinant of a triangular matrix equals the product of its diagonal entries.

\begin{figure}[t!]
\begin{center}
\includegraphics[width=0.75\textwidth]{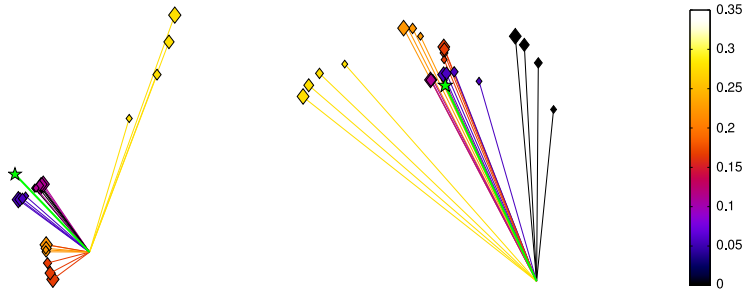} 
\caption{\label{fig:numvsana} Comparison of search directions for Algorithm~\ref{alg:steep} produced by different numerical methods. The vectors illustrate numerical approximations of $\nabla \psi (\theta^-) \in \R^2$ (cf.~\eqref{eq:cost}). The green one (with a star marker) is the Fr\'echet derivative whereas the rest of the vectors are computed using central difference formulas with different number of grid points and perturbation sizes. The colorbar visualizes the magnitude of the perturbation in the electrode positions and the bigger the marker size, the higher the order  of the central difference is ($2,4,6$ or $8$). The left-hand image corresponds to  $\log \det (\Gamma_\ast)$ and the right-hand image to ${\rm tr}(\Gamma_\ast)$ (the plots are in different scales).}
\end{center}
\vspace{-2mm}
\end{figure}

Let us conclude this section with a numerical motivation of the proposed (Fr\'echet) differentiation technique for the posterior covariance related functionals in \eqref{eq:cost}; cf.~Theorem~\ref{secder}. We claim that, besides clearly being computationally more efficient, the Fr\'echet derivative based method is also likely to be more accurate than a carelessly chosen perturbative numerical differentiation approach. In order to make a single comparison, we choose the following attributes for the test object: $\Omega = B(0,1)$, i.e., the unit disk, and $\sigma_\ast \equiv 1$. We employ two electrodes of width $\pi/16$ characterized by $\theta^-_{\rm init} = [0,\pi/2]^{\rm T}$ whence there is essentially only one current injection pattern $I = \Ical = [-1,1]^{\rm T}$. The contact impedances are set to $z_1=z_2=1$. The Gaussian prior and the additive zero mean noise process have the covariances 
\[
\Gamma_{\rm pr} = 0.2^2 \mathbbm{1}, \qquad \Gamma_{\rm noise} = \left(10^{-3}|U_1(\sigma_\ast,\theta^-) - U_2(\sigma_\ast,\theta^-)|\right)^2 \mathbbm{1},
\] 
respectively, where $\mathbbm{1}$ denotes an identity matrix of the appropriate size. Fig.~\ref{fig:numvsana} illustrates the line search directions at the first step of Algorithm~\ref{alg:steep} obtained by the Fr\'echet derivative and by a number of central difference formulas \cite{Mathews99}. Depending on the order of the employed difference rule and the size of the used perturbation for the electrode locations, the search directions provided by the perturbative method vary considerably. On the other hand, the directions given by the Fr\'echet derivative technique seem to be in a relatively good agreement with the difference schemes of the highest order with relatively small (but not too small) perturbation sizes, which arguably indicates both computational efficiency and reliability of the Fr\'echet derivatives.
%Based on the figure, it is not contradictory to conclude that the Fr\'echet derivative seems to give legitimate directions compared to the central difference quotients.

\section{Numerical examples}
\label{sec:numer}

In this section, as a proof of concept, we apply the proposed algorithm to a few test cases. In all examples we choose, for simplicity, the prior mean to be homogeneous, $\sigma_\ast \equiv 1$, and fix all contact impedances to the unit value regardless of the number of electrodes in use. Furthermore, we use the fixed, full set of current input patterns $\Ical = [e_1^{\rm T},e_1^{\rm T},\ldots, e_1^{\rm T}]^{\rm T}-[e_2^{\rm T},e_3^{\rm T},\ldots, e_M^{\rm T}]^{\rm T}\in \R^{M(M-1)}$ where ${\rm e}_m$ denotes the $m$th Cartesian basis vector. In other words, we compose a basis of $\R_\diamond^M$ by fixing one feeding electrode and letting the current exit in turns from the remaining ones. A bit surprisingly, the choice of the current patterns did not have a notable effect on the optimal electrode positions in our numerical tests; according to our experience, this would not necessarily be the case for considerably higher noise levels. Note, however, that if a full set of $M-1$ independent current injections was not available, the choice of current injection patterns would have an important role in the optimal experiment design \cite{Kaipio04}.
We restrict our attention to Gaussian smoothness priors with covariance matrices $\Gamma_{\rm pr} = \Gamma(\lambda,\kappa)$ of the form
\begin{equation}\label{eq:smooth}
\Gamma_{ij} = \kappa_{ij}^2 \exp\bigg(-\frac{|x^{(i)} - x^{(j)}|^2}{2\lambda^2}\bigg), \qquad x^{(i)},x^{(j)} \in D,
\end{equation}
where $\kappa_{ij}^2 > 0$ are the covariance factors between the node values of the conductivity on the uniform triangulation of  the background domain $D$, and $\lambda$ is the correlation length that controls spatial smoothness.
If $\kappa_{ij} = \kappa$ is constant, the prior is said to be homogeneous. In particular, bear in mind that the prior mean is homogeneous in all our experiments, and the term ``inhomogeneous prior'' reflects a property of the prior covariance matrix.  

%$\Gamma_{\rm noise} = \eta^2\mathbbm{1}$, where
%where $\eta > 0$ is chosen case-specifically. 

In what follows, we consider three test cases. In the first one, we choose a simple inhomogeneous prior and use only four electrodes so that the optimized electrode positions can be verified by brute force simulations; the motive is to validate the functionality of Algorithm~\ref{alg:steep}. The second example continues to use an intuitive inhomogeneous prior but with twelve electrodes, making the brute force computations practically infeasible; the idea is to demonstrate that the output of Algorithm \ref{alg:steep} remains as expected even for a higher dimensional setting. The last example considers still twelve electrodes, but with three different object shapes and a homogeneous prior; the aim is to indicate that the domain shape also has an effect on the optimal electrode locations. In all three tests, we assume mean-free, additive white noise model with the covariance matrix 
$$
\Gamma_{\rm noise} = \big(10^{-3} \max_{k,l} |\Ucal_k(\sigma_\ast, \theta^-_{\rm init})-\Ucal_l(\sigma_\ast, \theta^-_{\rm init},)|\big)^2 \, \mathbbm{1},
$$
where $\Ucal(\sigma_\ast, \theta^-_{\rm init})$ refers to the (exact) total electrode potential vector corresponding to the prior mean conductivity and the initial electrode configuration. In practice, the noise might also depend on the electrode configuration, but we choose to ignore this complication. (Note that $\Gamma_{\rm noise}$ changes slightly from test to test as  $\Ucal(\sigma_\ast, \theta^-_{\rm init})$ does.)

\subsection*{Case 1: A circular subdomain with high prior variance}

\begin{figure}[t!]
\begin{center}
\includegraphics[width=1\textwidth]{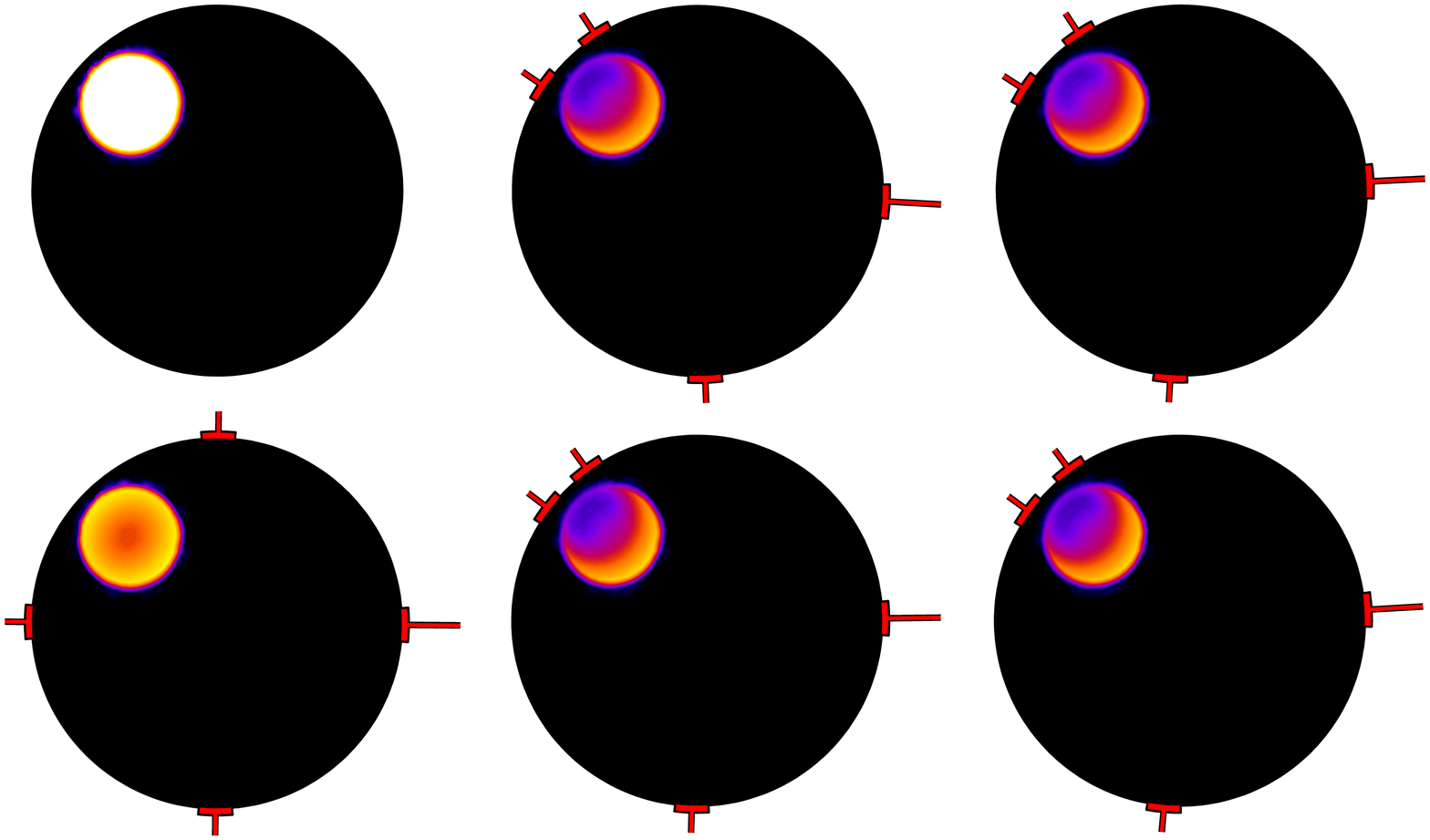}\\[8pt]
\includegraphics[width=0.5\textwidth]{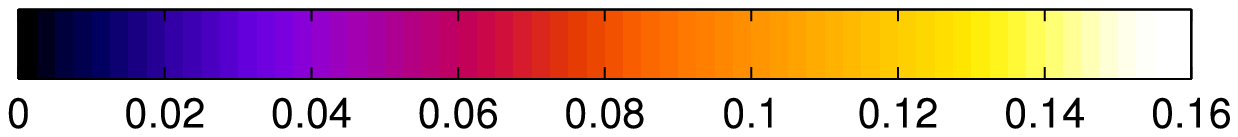} 
\caption{\label{fig:brutevsalg} Comparison of the output of Algorithm \ref{alg:steep} with the `brute force' minimizer in Case 1. The colormap corresponds to the 
%interpolated 
point variances (diagonal of the covariance matrix) of the distribution in question. Top left: the prior variance~\eqref{eq:inhomog}. Bottom left: the initial guess $\theta^-_{\rm init}$ with the corresponding posterior variance. Center column: the global minimizer for ${\rm tr}(\Gamma_\ast)$  (top) and $\log \det (\Gamma_\ast)$ (bottom) in \eqref{eq:cost} with the corresponding posterior variances.  Right column:  the output of Algorithm~\ref{alg:steep} for ${\rm tr}(\Gamma_\ast)$  (top) and $\log \det (\Gamma_\ast)$ (bottom) in \eqref{eq:cost} with the corresponding  posterior variances.}
\end{center}
\vspace{-2mm}
\end{figure}

In the first test, the object of interest $\Omega$ is a unit disk and the prior covariance is inhomogeneous: We pick a smaller disk $\Omega' \subset \Omega$ (cf.~Fig.~\ref{fig:brutevsalg}) and choose $\Gamma_{\rm pr} = \Gamma(0.5,\kappa)$ with
\begin{equation}\label{eq:inhomog}
\kappa_{ij} = \begin{cases} 0.4, & x^{(i)},x^{(j)} \in \overbar{\Omega}', \\ 0.03, & x^{(i)},x^{(j)} \notin \overbar{\Omega}', \\ 0, & x^{(i)}\in \overbar{\Omega}', \ x^{(j)}\notin \overbar{\Omega}', \ {\rm or} \ {\rm vice \ versa}, \end{cases} \qquad x^{(i)},x^{(j)} \in D.
\end{equation}
In other words, $\Omega$ consists of two uncorrelated parts, one of which is a relatively small inclusion with a large variance, whereas the background conductivity values are `almost known'. The four employed  electrodes are depicted in Fig.~\ref{fig:brutevsalg}, with the special, current-feeding electrode indicated by a longer `cord'.

%For simplicity, we take a constant noise covariance of form 
%\begin{equation}\label{eq:noise}
%\Gamma_{\rm noise} = 0.1^3 \max_{k,l} |\Ucal_k(\theta^-_{\rm init})-\Ucal_l(\theta^-_{\rm init})|
%\end{equation}
%although in practice the noise might also depend on the electrode configuration. 

As we are primarily interested in whether Algorithm \ref{alg:steep} finds the global minimum of the functional $\psi$ from \eqref{eq:cost}, we evaluate $\psi$ on a reasonably large set of angular parameters $\theta^-\in [0,2\pi)^4$ (so that the ordering of the electrodes does not change) and eventually minimize over the evaluations. In Fig.~\ref{fig:brutevsalg}, the brute force based global minimizers are compared to the outputs of Algorithm~\ref{alg:steep}, with uniformly spaced electrodes serving as the initial guess. At least with such a low number of electrodes, the numerical results support the functionality of Algorithm \ref{alg:steep}, that is, for both alternative functionals in \eqref{eq:cost}, the optimal configurations provided by Algorithm~\ref{alg:steep} and the brute force computations approximately coincide. Moreover, the optimal positions seem rather logical: Two of the four electrodes move close to the smaller disk that carries most of the uncertainty, which clearly reduces the variance of the posterior, as illustrated in Fig.~\ref{fig:brutevsalg}.

To demonstrate the nontrivial dependence of the optimal measurement configuration on the prior knowledge about the conductivity, Fig.~\ref{fig:test4el} illustrates how the optimal electrode positions change when the covariance factor for $x^{(i)}, x^{(j)} \notin \overbar{\Omega}'$ in~\eqref{eq:inhomog} varies in the interval $\kappa_{ij} \in [0.05, 0.2]$. In other words, the images in Fig.~\ref{fig:test4el} correspond to different ratios between the prior uncertainties in $\overline{\Omega}'$ and $\Omega \setminus \Omega'$. Here, we employ the A-optimality criterion, i.e., ${\rm tr}(\Gamma_\ast)$ in \eqref{eq:cost}. Notice that with the two highest values $\kappa_{ij} = 0.15, 0.2$ for $x^{(i)}, x^{(j)} \notin \overbar{\Omega}'$, the electrode positions presented in Fig.~\ref{fig:test4el} are not symmetric with respect to $\Omega'$, meaning that there exist two optimal configurations that are certain mirror images of one another. 

\begin{figure}[t!]
\begin{center}
\includegraphics[width=1\textwidth]{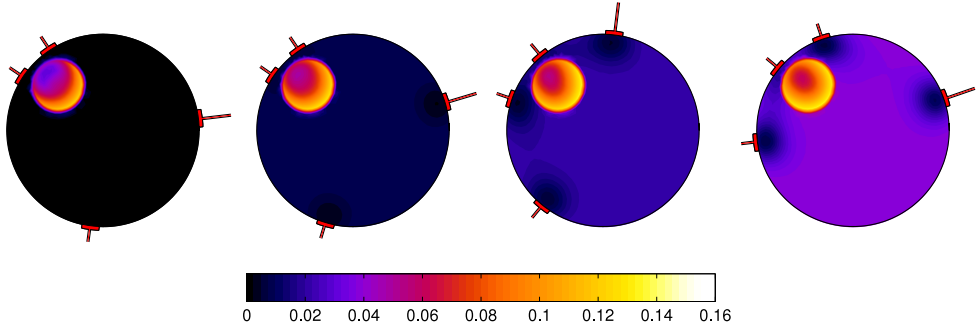}\\[10pt]
\includegraphics[width=0.5\textwidth]{096617Rf1.eps} 
\caption{\label{fig:test4el} Illustration of the effect of $\kappa_{ij}$ for $x^{(i)}, x^{(j)} \notin \overbar{\Omega}'$ in \eqref{eq:inhomog} on the optimized electrode positions in Case 1 with the A-optimality criterion,~i.e.,~with ${\rm tr}(\Gamma_\ast)$ in \eqref{eq:cost}. From left to right: the optimized electrode positions corresponding to the values $\kappa_{ij} = 0.05,0.1,0.15$ and $0.2$ for the `prior background standard deviation' in \eqref{eq:inhomog}. The colormap is as in Fig~\ref{fig:brutevsalg}.}
\end{center}
\vspace{-2mm}
\end{figure}

\subsection*{Case 2: Semidisks with different prior variances}

In this example, there are twelve electrodes attached to the unit disk $\Omega = B(0,1)$. The prior covariance is of the form $\Gamma_{\rm pr} = \Gamma(0.5,\kappa)$ where 
\begin{equation}\label{eq:split}
\kappa_{ij} = \begin{cases} 0.03, & x_2^{(i)},x_2^{(j)} \geq 0, \\[2pt] 0.4, & x_2^{(i)},x_2^{(j)} < 0, \\[1pt] 0, & x_2^{(i)} \geq 0, \ x_2^{(j)} < 0, \ {\rm or} \ {\rm vice \ versa},\end{cases}
\qquad x^{(i)},x^{(j)} \in D,
\end{equation}
that is, the upper and lower halves of $\Omega$ are uncorrelated, with the point variances being considerably higher in the lower than in the upper half. 
%The noise covariance has been chosen as in \eqref{eq:noise}.
The outputs of Algorithm~\ref{alg:steep} for the two objective functionals in \eqref{eq:cost} are illustrated in Fig.~\ref{fig:12el_half}. We observe that both optimization criteria yield qualitatively very similar results, which are also intuitively acceptable: Almost all electrodes are moved around the lover half of $\Omega$ where the uncertainty about the conductivity is the highest {\em a priori}. Take note that the slight asymmetry in the optimal electrode configurations of Fig.~\ref{fig:12el_half} is probably due to the original/final position of the current-feeding electrode (with a longer cord); the reached optimum is not necessarily the global one, albeit the corresponding value of the objective functional is probably very close to optimal.

\begin{figure}[b!]
\begin{center}
\includegraphics[width=1\textwidth]{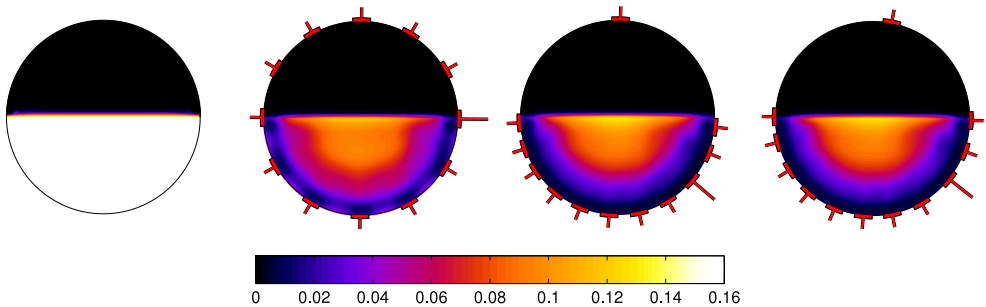}\\[8pt]
\includegraphics[width=0.5\textwidth]{096617Rf1.eps}
\caption{\label{fig:12el_half} Output of Algorithm \ref{alg:steep} with twelve electrodes 
%and an inhomogeneous covariance matrix \eqref{eq:split} for the unit disk 
in Case 2. 
%The colormap shows the interpolated point variances (diagonal of the covariance matrix) of the distribution in question. 
The colormap corresponds to the point variances of the distribution in question. From left to right: (i) The prior variance \eqref{eq:split}. (ii) The initial guess $\theta^-_{\rm init}$ with the corresponding posterior variance. (iii)~\&~(iv) The outputs of Algorithm~\ref{alg:steep} with the corresponding posterior variances for $\log \det (\Gamma_\ast)$ and ${\rm tr}(\Gamma_\ast)$ in \eqref{eq:cost}, respectively. 
%On top left is the prior distribution and on top right is $\theta_{\rm init}$ with the corresponding posterior variances. In turn, on the bottom row we have the optima with the corresponding posterior variances; on the left with respect to $\log\det(\Gamma_\ast)$ and on right to ${\rm tr} (\Gamma_\ast)$. 
}
\end{center}
\vspace{-2mm}
\end{figure}

We proceed with an evaluation of the output of Algorithm \ref{alg:steep}. More precisely, we compare the mean square error in the MAP reconstructions corresponding to the optimized electrode positions with that for the initial equidistant electrode configuration. The outline of the procedure is as follows: A number of $ N_{\rm draw} $ conductivities $ \sigma_{\rm draw} $ are drawn from the prior $ \mathcal{N}(\sigma_\ast,\Gamma_{\rm pr}) $. For each $ \sigma_{\rm draw} $, a datum $ \Vcal_{\rm sim} $ is simulated by adding artificial noise drawn from $\mathcal{N}(0,\Gamma_{\rm noise})$ to the forward solution (computed on a denser FE mesh than the one used for the reconstructions to avoid an inverse crime). Take note that the chosen noise covariance is such that the expected $|\Vcal_{\rm sim}-\Rcal(\sigma_{\rm draw})\Ical|^2$ is about $ 0.005^2 |\Rcal(\sigma_\ast)\Ical|^2$, i.e., it roughly corresponds to $0.5 \%$ of relative error. Subsequently, for each $ \Vcal_{\rm sim} $, an (approximate) MAP estimate, determined by \eqref{map} where the posterior density  is the unlinearized one from \eqref{eq:gausspost}, is computed by a variant of the Gauss--Newton algorithm (see, e.g.,~\cite{Darde13a}). 

\begin{figure}[t]
\begin{center}
\includegraphics[width=1\textwidth]{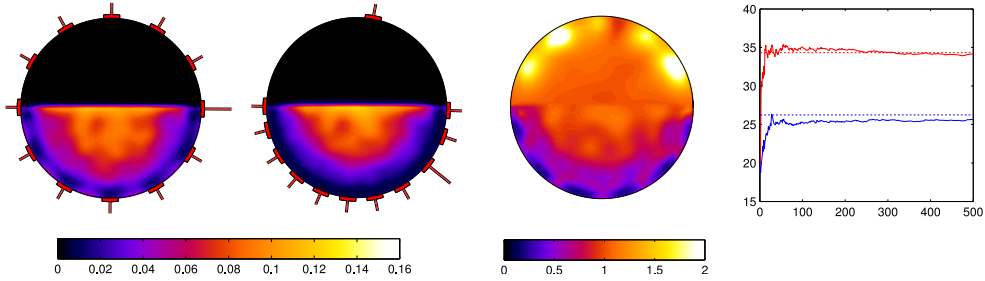}\\[16pt]
\caption{\label{fig:12el_half_MAP}
An evaluation of the output of Algorithm \ref{alg:steep} in Case 2. 
From left to right: (i) The average of the pointwise squared errors $(\sigma_{\rm draw}-\hat{\sigma}_{\rm MAP}(\Vcal_{\rm sim};\theta^ -))^2$ for an equidistant $\theta^- = \theta_{\rm init}^-$. (ii) The average of the pointwise squared errors $(\sigma_{\rm draw}-\hat{\sigma}_{\rm MAP}(\Vcal_{\rm sim};\theta^ -))^2$ for an optimized $\theta^- = \theta_{\rm opt}^-$ with ${\rm tr}(\Gamma_\ast(\theta))$ in \eqref{eq:cost}.
(iii) The ratio of (ii) and (i). (iv) The value of \eqref{eq:mse} approximated by ${\rm tr}(\Gamma_\ast(\theta))$ (dashed horizontal line) and by the random draws (solid line as a function of the number of random draws), respectively; the upper (red) and lower (blue) pairs of graphs correspond to $\theta_{\rm init}^-$ and $\theta_{\rm opt}^-$, respectively.
%The value of \eqref{eq:mse} approximated by random draws as a function of the number of random draws (solid curve), and the respective ${\rm tr}(\theta^-)$ (dashed horizontal line); the upper (red) and lower (blue) pairs of curves correspond to $\theta_{\rm init}^-$ and $\theta_{\rm opt}^-$, respectively.  
% (iv) The relative discrepancy between the value of \eqref{eq:mse} approximated by the random draws and ${\rm tr}(\Gamma_\ast(\theta))$ used in the optimization scheme (cf.~\eqref{eq:msegauss}) as a function of the number of random draws (the dotted and solid lines correspond to $\theta_{\rm init}^-$ and $\theta_{\rm opt}^-$, respectively). 
%Below, in row-wise order: Some draws from the prior distribution \eqref{eq:split}, the MAP estimates the initial uniform electrode positions, and the MAP estimates corresponding to the output of Algorithm \ref{alg:steep} with $ {\rm tr}(\Gamma_\ast)$ in \eqref{eq:cost} as the electrode positions. Cf.~Fig.~\ref{fig:12el_half}.
} 
\end{center}
\vspace{-2mm}
\end{figure}

Assuming that the (pseudo)random draws are distributed as intended, it is easy to 
%point out 
deduce that each pair $ (\sigma_{\rm draw},\Vcal_{\rm sim}) $ is a realization of a random variable with the joint density $ p(\sigma,\Vcal) = p(\Vcal | \sigma) p_{\rm pr}(\sigma) $ (cf.~\eqref{eq:gausspost}). By the {\em strong law of large numbers}, the average of the square errors $ |\sigma_{\rm draw}-\hat{\sigma}_{\rm MAP}(\Vcal_{\rm sim};\theta^ -)|^2$ thus tends almost surely to \eqref{eq:mse} as $ N_{\rm draw}$ goes to infinity, with $ A $ being the identity matrix in \eqref{eq:mse}. The numerical results with $ N_{\rm draw} = 500 $ are visualized in Fig.~\ref{fig:12el_half_MAP}. In particular, we get approximately $0.75 $ as the ratio between the averages of $ |\sigma_{\rm draw}-\hat{\sigma}_{\rm MAP}(\Vcal_{\rm sim};\theta_{\rm opt}^ -)| ^2 $ and $ |\sigma_{\rm draw}-\hat{\sigma}_{\rm MAP}(\Vcal_{\rm sim};\theta_{\rm init}^ -)|^2 $, which demonstrates the superiority of the optimized configuration. 
%Hence, in this example the root mean square error decreases about $13\%$ when using the optimized electrode configuration instead of the initial uniform one.
 
%In order to get an impression how the electrode positions affect EIT reconstructions, we compare MAP estimates corresponding to the initial and the optimized electrode configurations for five target conductivities drawn from the prior $\mathcal{N}(\sigma_\ast,\Gamma_{\rm pr})$. The results are visualized in Fig.~\ref{fig:12el_half_MAP}. The (approximate) MAP estimates are computed by a variant of the Gauss--Newton algorithm (see e.g.~\cite{Darde13a}), with the input data simulated by adding artificial noise drawn from $\mathcal{N}(0,\Gamma_{\rm noise})$ to the forward solution (computed on a denser FE mesh than the one used for the reconstructions to avoid an inverse crime). This corresponds to $|\Vcal-\Rcal(\sigma)\Ical| \approx 0.005 |\Rcal(\sigma)\Ical|$, i.e., about $0.5 \%$ of relative noise. 
%Recall that the MAP estimates are determined by \eqref{map}
%%\[
%%\sigma_{\rm MAP}(\Vcal) = \arg\min_{\sigma} \log p(\sigma | \Vcal)
%%\] 
%where the posterior density  is the (unlinearized) one from \eqref{eq:gausspost}. On average, the reconstructions corresponding to the optimized electrode positions seem more reliable than those obtained with the equidistant electrode configuration.

%The MAP estimate can be solved iteratively (e.g.~using Gauss--Newton algorithm) and it does not require the global linearization \eqref{eq:J0} of the forward operator $\Rcal$. 

\subsection*{Case 3: Objects with different boundary shapes}
The final numerical test considers finding the optimal locations of twelve electrodes around three different objects: the ellipse-like shape, the peanut, and the complicated domain shown in Fig.~\ref{fig:12el_eccentric}. For all three objects, the conductivity prior is $\Gamma(0.5,0.4)$ and the initial electrode positions for Algorithm~\ref{alg:steep} are presented in the top row of Fig.~\ref{fig:12el_eccentric}. The optimal electrode locations produced by Algorithm~\ref{alg:steep} (with the choice $\log \det (\Gamma_\ast)$ in the functional \eqref{eq:cost}) are illustrated on the bottom row of Fig.~\ref{fig:12el_eccentric}. In the optimized configurations, the widest gaps end up over boundary segments with negative curvature, but other general conclusions are difficult to draw based on Fig.~\ref{fig:12el_eccentric} only. As an example, the optimal configuration for the ellipse seems to be almost equidistant. Be that as it may, the values of the objective functional \eqref{eq:cost} are considerably lower for the optimized electrode configurations compared to their unoptimized counterparts. 

%As the final numerical test, we consider three different object shapes and an isotropic and homogeneous prior covariance $\Gamma_{\rm pr} = \Gamma(0.5,0.4)$. Again, for each of the shapes the noise covariance is taken as in \eqref{eq:noise}. The results are presented in Fig.~\ref{fig:12el_eccentric}.

\begin{figure}[h!]
\begin{center}
\includegraphics[width=1\textwidth]{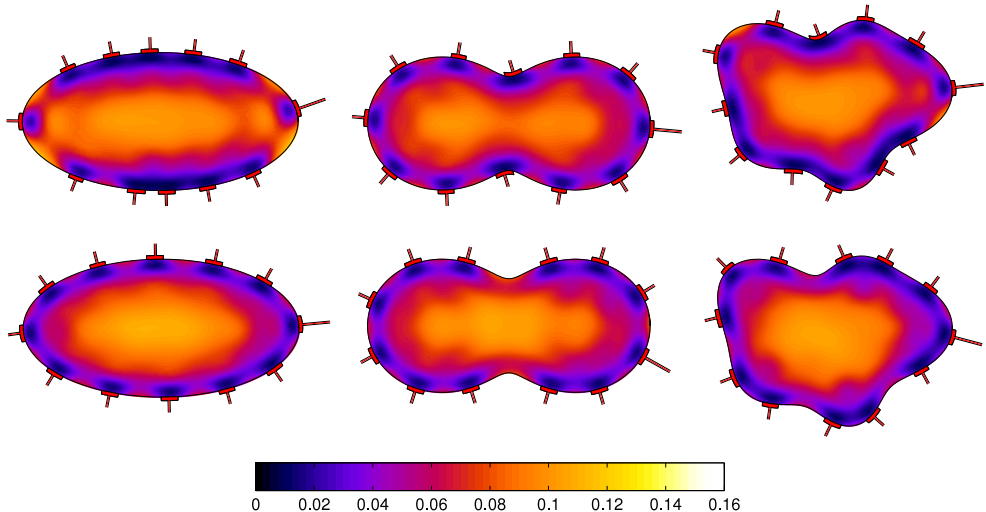}\\[8pt]
\includegraphics[width = 0.5\textwidth]{096617Rf1.eps}
\caption{\label{fig:12el_eccentric} Output of Algorithm \ref{alg:steep} with different object shapes and a homogeneous Gaussian smoothness prior in Case 3. 
The colormap corresponds to the point variances of the distribution in question.
Top row: the initial guesses $\theta_{\rm init}^-$ with the corresponding posterior variances. 
Bottom row: outputs of Algorithm~\ref{alg:steep} for $\log\det(\Gamma_\ast)$ in \eqref{eq:cost} with the corresponding posterior variances.}
\end{center}
\vspace{-2mm}
\end{figure}

\section{Conclusions}
\label{sec:conc}
We have proposed a methodology for the optimization of the electrode positions in EIT. The employed optimality criteria were derived from the Bayesian approach to inverse problems, with the aim being to make the posterior density of the conductivity as localized as possible.
%namely the so-called A- and D optimality. 
In order to lighten the computational load, we approximated the posterior by linearizing the current-to-voltage map of the CEM with respect to the conductivity around the prior mean, which allowed closed form expressions for the to-be-minimized quantities: the trace and the determinant of the posterior covariance matrix, which correspond to the so-called A- and D-optimality criteria of experiment design, respectively. 

%In order to lighten the computational load, we approximated the posterior distribution by linearizing the current-to-voltage map of the CEM with respect to the conductivity around the prior mean, which allowed closed form expressions for the to-be-minimized quantities. 
%This is justifiable in the presence of relatively large observation noise and small contrasts. 
The introduced optimization algorithm is of the steepest descent type; the needed derivatives with respect to the electrode locations were computed based on appropriate Fr\'echet derivatives of the CEM. Our numerical experiments demonstrate (i) the functionality of the algorithm, that (ii) the optimal electrode configurations are nontrivial even in relatively simple settings, and that (iii) the employment of the optimal electrode locations improves the quality of EIT reconstructions.

%In the optimization we applied gradient descent by utilizing a concept of electrode derivative in the differentiation of the objective functional.

% Bibliography using bibtex %%%%%%%%%%%%%%%%%%%
\bibliographystyle{acm}
\bibliography{opel-refs}{}
%%%%%%%%%%%%%%%%%%%%%%%%%%%%%%%%%%%%%%%%%%%%%%%

\end{document}